\algnewcommand{\LineComment}[1]{\Statex \hskip\ALG@thistlm \(\triangleright\) #1}
\newtheorem{theorem}{Theorem}
\newtheorem{definition}[theorem]{Definition}
\newtheorem{ass}[theorem]{Assumption}
\newtheorem{lemma}[theorem]{Lemma}
\newtheorem{remark}[theorem]{Remark}
\newtheorem{proof}[theorem]{Proof}
\def\A{\mathcal{A}}
\def\E{\mathcal{E}}
\def\G{\mathcal{G}}
\def\N{\mathcal{N}}
\def\V{\mathcal{V}}
\def\1{{\mathbf{1}}}
\def\Exp{\mathbb{E}}
\def\Prob{\mathbb{P}}
\def\R{\mathbb{R}}
\def\C{\mathbb{C}}
\def\diag{\mathrm{diag}}
\def\blkdiag{\mathrm{blkdiag}}
\def\T{^{\top}}
\newcommand{\neighbors}[2]{ \mathcal{N}^{+}_{#1}(#2)}
\newcommand{\until}[1]{\left\{1,\ldots ,#1\right\}}
\newcommand{\tq}[1]{\textquotedblleft #1\textquotedblright}
\newcommand{\robustCost}[1]{\| #1 \|_{1,\nu}}
\newcommand{\FullTitle}{Generalized gradient optimization over lossy networks for partition-based estimation} 
\newcommand{\FullAuthors}{M.~Todescato, N.~Bof, G.~Cavraro, R.~Carli, L.~Schenato}
\title{\FullTitle}
\author{\FullAuthors}
\begin{document}

\footnotetext[1]{M.~Todescato, N.~Bof, R.~Carli and L.~Schenato are with the Department of Information Engineering, University of Padova, Italy, 35031. E-mail: [todescat,bofnicol,carlirug,schenato]@dei.unipd.it.}
\footnotetext[2]{G.~Cavraro is with the Virginia Polytechnic Institute ans State University, VA, USA. E-mail: cavraro@vt.edu.}

\maketitle

\begin{abstract}
We address the problem of distributed convex unconstrained optimization over networks characterized by asynchronous and possibly lossy communications. We analyze the case where the global cost function is the sum of locally coupled local strictly convex cost functions. As discussed in detail in a motivating example, this class of optimization objectives is, for example, typical in localization problems and in partition-based state estimation. Inspired by a generalized gradient descent strategy, namely the block Jacobi iteration, we propose a novel solution which is amenable for a distributed implementation and which, under a suitable condition on the step size, is provably locally resilient to communication failures. The theoretical analysis relies on the separation of time scales and Lyapunov theory. In addition, to show the flexibility of the proposed algorithm, we derive a resilient gradient descent iteration and a resilient generalized gradient for quadratic programming as two natural particularizations of our strategy. In this second case, global robustness is provided.
Finally, the proposed algorithm is numerically tested on the IEEE 123 nodes distribution feeder in the context of partition-based smart grid robust state estimation in the presence of measurements outliers.
\end{abstract}

%===============================================================================
\section{Introduction}\label{sec:intro}
%===============================================================================
The widespread of smart wireless electronic devices with the consequent creation of large-scale cyber-physical networked systems promises a new revolution in many fields. However, these novel engineering systems and the advent of the \tq{Big-Data} era require the development of new computational paradigms, due to the increasing amount of devices and data to be consistently managed. For example, many problems can be cast as optimization problems. As so, in the last years there has been a growing attention to distributed optimization tools which have become so important for two different reasons: first, the advent of Big Data asks for \emph{parallelisation} of the computational burden among many processing units since it is inconceivable to run optimization algorithms on one single (super)-computer. Second, many optimization problems are \emph{sparse} by nature since correlation between data is local. 
Nevertheless, one of the major hurdle to effectively deal with distributed optimization using multiple processing units is to guarantee synchronous and reliable communication. Indeed, communication can be wireless and CPU execution times might not be known in advance as in the context of cloud-computing. 
For this reason, although distributed optimization has a long history in the parallel and distributed computation literature, see, e.g., \cite{BD:TP:1989}, it has mainly focused on synchronous algorithms. However, to suitably fit with the upcoming large-scale system scenario, in the last years it has been reconsidered from a new peer-to-peer perspective.
The first class of algorithms appearing in this new literature relies on primal sub-gradient or descent iterations, as in \cite{NA:OA:2009,NA:OA:2010,Marelli2015}, which have the advantage to be easy to implement and suitable for asynchronous computation.
In order to induce robustness in the computation and improve convergence speed, augmented lagrangian algorithms such as the Alternating Direction Methods of Multipliers (ADMM) have been recently proposed. A first distributed ADMM algorithm was proposed in \cite{SI:RA:GG:2008,KV:GG:2013,BS:CR:TM:2014}, while a survey on this technique is \cite{BS:PN:CE:PB:EJ:2011}. However, for a distributed implementation, ADMM usually requires problems with very specific structures. In fact, most of the ADMM distributed algorithms are based on a consensus iteration \cite{EW:AO:2013}. Thus, a common drawback of this technique is that each node must store in its local memory a copy of the entire state vector. To avoid this problem, a recent partition-based and scalable approach applied to the ADMM algorithm is presented in \cite{ET:2012}, while to comply with asynchronous computation, suitable modification of the ADMM algorithm have been proposed in \cite{IF:BP:CP:HW:2013,BP:HW:IF:2014}. Finally, distributed algorithms based on Newton methods have been proposed to speed-up the computation \cite{ZM:RA:OA:JA:2014,ZF:VD:CA:PG:SL:2011}.\\

\noindent In this paper, we address the problem of \emph{distributed} convex unconstrained optimization over networks characterized by asynchronous and possibly lossy communications. We analyze the case where the global cost function is the sum of \emph{locally coupled} local costs. More specifically, by  \tq{locally coupled} and \tq{distributed} we mean the following
\begin{definition}[\bf Local coupling and distributed algorithm]\label{def:distributed}
Consider a set of $N$ processing units, $\until{N}$, which are interconnected according to a certain communication network. To each unit $i\in\until{N}$ a local cost $J_i$ is assigned. We say that $J_i$ is \emph{locally coupled} according to the communication network, if $J_i$ depends only on quantities which are related to unit $i$ and to units directly connected to it. In this case, a \emph{distributed algorithm} is a procedure running over the communication network and among the processing units, which only requires the exchange of local information among connected units. Differently said, with a slight abuse of nomenclature, a distributed algorithm is defined as a locally coupled procedure.\hfill$\square$ 
\end{definition}
\noindent Given Definition~\ref{def:distributed}, this study is motivated mainly by two facts. The first is its practical engineering relevance. Indeed, as discussed in detail in a motivating example we provide in Section~\ref{sec:example}, the structure of the class of convex optimization problems we analyze, characterizes a large variety of applications such as multi-area electric grid state estimation \cite{AC:SD:MC:2007,BS:CR:TM:2014}, localization in multi-robots formation \cite{Carron:14} and sensors networks \cite{NB:MT:RC:LS:2016} and Network Utility Maximization \cite{DPP:MC:2006}. The second is due to the class of gradient-based algorithms (e.g., \cite{NA:OA:2009,NA:OA:2010,Marelli2015}) we consider to solve our optimization problem. In particular, while it has the advantage to be easy to implement and suitable for asynchronous implementations, this class usually does not lead to \tq{distributed} solutions as intended in Definition~\ref{def:distributed}. Indeed, the derivatives of costs obtained as the sum of locally coupled costs are, usually, not locally coupled, yet they depend on information related to multi-hop processing units. Hence, the local functional dependance cannot be directly exploited. To overcome this issue, in some cases (\cite{NA:OA:2009,NA:OA:2010}) the algorithms require the local exchange of global information, hence all the processors eventually reach consensus to an optimal solution. In others, the algorithms require multiple communication rounds within the same algorithmic iteration (\cite{Marelli2015}). However, this solution implicitly asks for synchronicity. Hence, to deal with the case of lossy communications, a natural approach is to make the processing units store the last successfully received information from the neighboring units in order to leverage the vast existing body of literature on the so called \emph{partially asynchronous iterative methods} \cite{BD:TP:1989}. However, as later described in Section~\ref{sec:example}, in this scenario, because of packet drops and communication failures, the same state variables happen to appear in multiple delayed version. Thus, it is not possible to write the evolution of the state variables as a \emph{partially asynchronous iterative methods}. Finally, regarding the problem of computing non-locally coupled derivatives, another used approach is to exploit hyper-communication graphs which differs from the graph structure induced by the local coupling characterizing the cost function, thus artificially bypassing the limitations due to a \tq{truly} distributed procedure. As a particular example of this fact, consider, for instance, the case where the processors communicate through a communication network with star topology. According to Definition \ref{def:distributed}, each peripheral node can communicate only with the central node, while the central processor can communicate with everyone else. Conversely, if a two-hop communication is exploited, then the communication network turns out to be described by an \emph{all-to-all} topology.\\ 
In this regard, the main contribution of the paper is a truly distributed algorithm, based on a modified \emph{generalized gradient descent} iteration which, under suitable assumptions on the step size, is provably convergent and which is resilient to the presence of packet losses in the communication channel.
To the best of the authors' knowledge, this is one of the first provably convergent algorithms in the presence of packet losses, since even if both ADMM algorithms and distributed sub-gradient methods (DSM) can handle asynchronous computations, they still require reliable communication and usually do not satisfy Definition~\ref{def:distributed}. Interestingly, the proposed algorithm is also suitable for fully parallel computation, i.e., multiple agents can communicate and update their local variable simultaneously, and for broadcast communication, i.e., nodes do not need to enforce a bidirectional communication such as in gossip algorithms, and therefore is very attractive from a practical point of view.
It is anticipated that we presented the proposed algorithm in two preliminary versions. In \cite{MT:GC:RC:LS:2015} for the specific case of quadratic programming, while in \cite{NB:MT:RC:LS:2016} for the specific application of sensors networks locations. The algorithm is inspired on and resembles the block Jacobi iteration appeared in \cite{BD:TP:1989,SB:CL:1994}. However, in \cite{BD:TP:1989}, since the authors are majorly interested in parallel computation rather than implementing a distributed procedure suitable for today's sensors networks, they implicitly assume to exploit an hyper-communication graph. Conversely, in \cite{SB:CL:1994} the particular local dependency considered in the cost function ensures that first and successive derivatives are locally coupled.\\
To show the flexibility of the proposed procedure, we derive a resilient gradient descent iteration and a resilient generalized gradient for quadratic programming as two natural particularizations of our strategy. In this second case, we are able to provide global robustness.\\
Finally, we numerically study our algorithm on the standard IEEE 123-nodes test feeder for robust state estimation in the presence of measurements outliers.\\  
The rest of the paper is organized as follows: the rest of this section is devoted to the necessary notation and preliminaries. In Section \ref{sec:problem_formulation} we formulate the problem. In Section \ref{sec:example} we provide a motivating example for the proposed set-up. In Section \ref{sec:sync_communication} we analyze the case of synchronous and ideal communications. In Section \ref{sec:async_communication} we analyze the case of asynchronous and possibly unreliable communications. In Section \ref{sec:simulations} we test our algorithm. Finally, we present some concluding remarks in Section \ref{sec:conclusion}.

%%% -------------------------------------------------------
\subsection{Mathematical Preliminaries}\label{subsec:math_prem}
%%% -------------------------------------------------------
In this paper, $\mathcal{G}\left(\V,\E\right)$
denotes a directed graph where $\V = \until{N}$ is the set of vertices and $\E\subseteq\V\times\V$ is the set of directed edges. More precisely the edge $(i,j)$ is incident on node $i$ and node $j$ and is assumed to be directed away from $i$ and directed toward $j$. The graph $\G$ is said to be bidirected if $(i,j) \in \E$ implies $(j,i) \in \E$. 
Given a directed graph $\G\left(\V,\E\right)$, a directed path in $\G$ consists
of a sequence of vertices $\left(i_1, i_2,\ldots, i_r\right)$ such that $\left(i_j , i_j+1\right) \in \E$ for every $j \in \until{r-1}$. The length of a path is the number of directed edges which it consists of.
%An undirected path in $\G$ consists of a sequence of vertices $\left(i_1, i_2,\ldots, i_r\right)$ such that either $\left(i_j , i_{j+1}\right) \in \E$ or $\left(i_{j+1}, i_j\right) \in \E$ for every $j \in \left\{1,\ldots , r-1\right\}$\footnote{Basically, an \emph{undirected path} is a path from a node to another node that does not respect the orientation of the edges.}.
The directed graph $\G$ is said to be \emph{strongly connected} if for any pair of vertices $(i, j)$ there exists a directed path connecting $i$ to $j$. 
Given the directed graph $\G$, the set of neighbors of node $i$, denoted by $\N_i$, is given by $\N_i=\left\{j \in \V \,|\, (i,j) \in \E\right\}$. Moreover, $\N_i^+ = \N_i \cup \left\{i\right\}$.
Let us denote the cardinality of $\N_i^+$ by $\mu_i$, while the $j$-th neighbor of $i$ by $\neighbors{i}{j}$.
Given a directed graph $\G\left(\V,\E\right)$ with $|\E| =M$, let the \emph{incidence matrix} $\A \in \mathbb{R}^{M \times N}$ of $\G$ be defined as $\A=[a_{ei}]$, where $a_{ei}=1, -1, 0$, if edge $e$ is incident on node $i$ and directed away from it, is incident on node $i$ and directed toward it, or is not incident on node $i$, respectively.
Given a vector or a matrix, with $(\cdot)\T $ we denote its transpose, while with $\Re (\cdot)$ and $\Im (\cdot)$ its real and imaginary parts, respectively. Given a vector $v$, with $\diag(v)$ we denote the diagonal matrix whose diagonal elements are equal to the elements of $v$. Given a matrix $V$, with $\diag(V)$ we denote the vector obtained with the diagonal elements of $V$. Given a group of matrices $V_1,\ldots,V_n$, with $\blkdiag(V_1,\ldots,V_n)$ we denote the block diagonal matrix whose $i$-th block diagonal element is equal to $V_i$. Moreover, we denote with $\A_d:=\A\T \A$ the \emph{adjacency matrix} or \emph{laplacian matrix} of $\G$ which has the property  that $[\A_d]_{ij}\neq 0$ if and only if $(i,j) \in \E$.
If we associate to each edge a weight different from one, then it is possible to define the \emph{weighted laplacian matrix} as $\mathcal{L}=\A\T W\A$, where $W\in\mathbb{R}^{M\times M}$ represents the diagonal matrix containing in its $i$-th element the weight associated to the $i$-th edge. We will also consider \emph{strictly convex functions} $f(x):\R^n\to\R$, i.e., $\forall \,x_1\neq x_2$ and $\eta\in(0,1)$ then $f(\eta x_1+(1-\eta)x_2)<\eta f(x_1)+(1-\eta)f(x_2)$ and \emph{radially unbounded}, i.e. $\|x\|\to +\infty \Rightarrow f(x)\to \infty$. 
Finally, with the symbols $\Exp$ and $\Prob$ we denote, respectively, the expectation operator and the probability of an event.

%===============================================================================
\section{Problem Formulation}\label{sec:problem_formulation}
%===============================================================================
Consider a set of $N$ agents $\V =\{ 1,\ldots,N\}$, where each agent $i\in\V$ is described by its state vector $x_i\in\R^{n_i}$. Assume the agents can communicate among themselves through a bidirected strongly connected \emph{communication graph} $\G(\V,\E)$. In this paper, we are interested in extending to the more general case of convex costs the algorithm first presented in \cite{MT:GC:RC:LS:2015} for the case of quadratic programming and in \cite{NB:MT:RC:LS:2016} for the specific application of sensors networks localization. In particular, we examine a particular class of separable strictly convex cost functions which exhibit local and possibly nonlinear dependence among the states of neighboring nodes. By defining the overall state vector as $x=[x_1\T ,\ldots,x_N\T ]\T \in\R^n$ ($n=\sum_i n_i$), we consider the following optimization problem 
\begin{equation}\label{eq:ConvexProblem}
\min_x J(x) \equiv \underset{x_1,\ldots,x_N}{\min}\ \sum_{i=1}^N J_i(x_i,\{x_j\}_{j\in\N_i})\, .
\end{equation} 
Observe that the local dependence coincides with the communication graph $\G$, i.e., each cost function $J_i$ depends on information regarding only agent $j\in\N_i^+$.\\
We will consider the following assumption on the cost fucntions:
\begin{ass}[{\bf Strict convexity and radial unboundedness}]\label{ass:convex}
The function $J(x)$ is assumed to be strictly convex and radially unbounded.\hfill$\square$ 
\end{ass}
Observe that under the previous assumption the minimizer $x^*$ of Problem~\eqref{eq:ConvexProblem}  exists and is unique
\begin{equation}\label{eq:minimizer}
x^* := \underset{x}{\mathrm{argmin}}\ J(x)\, ,
\end{equation} 
but the local costs function $J_i$ do not need to be strictly convex and radially unbounded. Indeed in many estimation problems the local cost functions $J_i$ are just strictly convex but not radially unbounded. 
The standard approach to solve the previous optimization problem is to resort to some centralized iterative algorithm acting on $J$, e.g., Newton-Raphson, which makes use of global knowledge of the network' states, costs and topology. On the contrary, by leveraging the particular local dependence characterizing each cost function $J_i$, we want to solve Problem~\eqref{eq:ConvexProblem} by developing a procedure which is \emph{distributed}, i.e., exploiting only local exchange of information among neighbors, and \emph{resilient}, i.e., resilient to communication limitations and non idealities.\\
We will also use the following simplified notation for local components of gradients and hessians:
$$ \nabla_i J_j = \frac{\partial J_j}{\partial x_i}, \ \ \ \ \   \nabla^2_{i\ell} J_j = \frac{\partial^2 J_j}{\partial x_i \partial x_\ell}\, .$$

\begin{remark}[\textbf{On the class of separable cost functions}]\label{rem:separable_cost}
The class of functions considered can arise in diverse applications such as state estimation in smart electric grids \cite{MT:GC:RC:LS:2015} and sensor networks localization \cite{NB:MT:RC:LS:2016}, just to mention some of them. In the particular case of quadratic cost, the optimization problem falls onto the standard linear least-squares framework. Nevertheless, as it will be shown in the simulation Section~\ref{sec:simulations}, the class is much more general and comprises penalty functions used, e.g., to perform robust statistics and general nonlinear least-squares optimization. Of particular interest is the more general framework of parallel computation in optimization. For both privacy and efficiency reasons, the computational burden can be split among several distributed machines. To each of them only information about $J_i$ is assigned. Thanks to local exchange of information, the machines must distributely compute a solution of \eqref{eq:ConvexProblem}.\hfill$\square$   
\end{remark}
\begin{remark}[\textbf{Partition-based modular communication architecture}]\label{rem:modularity}\ \\
Note that the particular communication architecture considered, seamlessly describes the case of communications among single peer agents as well as among  large areas consisting of a collection of peers. The only difference relies on the particular definition of the set $\V$ and of the agents' state $x_i$. For instance, in the case of sensor localization, each sensor might represent an agent of $\V$ while $x_i$ might describe its absolute position in an inertial global reference frame. Conversely, in the case of smart grids state estimation, one agent might describe an entire electric feeder; then, $x_i$ would be either voltages or currents at all the electric buses of the corresponding feeder.
\hfill$\square$
\end{remark}
%

%===============================================================================
\section{Motivating example: State estimation in Smart Power Distribution Grids}\label{sec:example}
%===============================================================================

In steady state the voltages and currents in a power distribution grid are regulated by the Kirchhoff's laws  which can be written as follow:
$$ 
Lv = i^c\, , 
$$
where $L$ is the admittance matrix, and $v$ and $i^c$ are the vector collecting all the $N$ voltages and currents of the nodes in the grid, respectively. The admittance matrix is a sparse matrix, in the sense that the current at a specific node $i$, namely $i^c_i$, depends only on its own voltage and the voltages of its physically connected neighbour nodes $\mathcal{N}_i$, i.e. 
$$ 
i^c_i = \sum_{j\in \mathcal{N}^+_i}L_{ij}v_j\, .
$$
In future smart distribution grids, it is expected that each node $i$ would be able to take noisy measurements of its voltage and current, i.e. 
\begin{eqnarray*}
y_i^v &=& v_i + w^v_i\, , \\
y_i^{i^c} &=& i^c_i + w^{i^c}_i = \sum_{j\in \mathcal{N}^+_i}L_{ij}v_j + w^{i^c}_i\, ,
\end{eqnarray*}
where $ w^v_i, w^{i^c}_i$ represent the measurement noise for the voltage and current measurements, respectively. It is also expected that these nodes are embedded with communication capabilities, such as power line communication (PLC), which allow them to communicate with their physically connected neighbours. As so the communication network and the physical network will coincide. 
The (centralized) state estimation problem is the process that, given all the measurements $\{ y_i^v, y_i^{i^c}\}_{i=1}^N$, should return the best estimate of all the voltages and currents  $\{ v_i, i^c_i\}_{i=1}^N$. The standard approach is to cast this problem as a least-square estimation problem, where the unknown quantities to be estimated are the voltages $v^*$, since the currents can be estimated directly from the voltages via the Kirchhoff's law $i^{c*}=Lv^*$. 
In this work, we are interested in solving this problem in a distributed fashion via a partition-based communication architecture. For the sake of clarity, let us assume that the grid is divided into $N$ partitions each corresponding to a node. To each partition, we associate the corresponding voltage, which we collect in the vector $x_i\in \mathbb{R}$\footnote{In reality, the voltages and currents in steady state are phasors, i.e., should be represented as complex numbers. However, the discussion in this section can be extended w.l.o.g. also to the more realistic scenario, which is indeed considered in the Simulation section below.}. 
Let us also define with $y_i= [y_i^v \ y_i^{i^c}]\T  \in \mathbb{R}^2$ and $w_i= [w_i^v \ w_i^{i^c}]\T  \in \mathbb{R}^2$ the measurement vector and the measurement noise corresponding to the measurements of the voltage and current at node $i$. Let us also define the vectors $x=[x_1 ,\dots, x_N]\T  \in \mathbb{R}^N$, $y=[y_1\T, \dots, y_N\T]\T \in\mathbb{R}^{2N}$, $w=[w_1\T, \dots, w_N\T]\T  \in \mathbb{R}^{2N}$. As so the measurement model can be written as:
$$ 
y_i = \sum_{j=1}^NA_{ij} x_j +w_i = \sum_{j\in\mathcal{N}_i^+}A_{ij} x_j +w_i \ \ \  \ \ \mbox{($A_{ij}=0$ if $j\notin \mathcal{N}_i^+$)}\, ,
$$
where $A_{ij}$ can be easily be obtained from the elements of the matrix $L$, or equivalently in vector form
$$ 
y = A x+w\, ,
$$
where $A= [A_1\T,  \dots, A_N\T ]\T \in\mathbb{R}^{2N\times N}$ and $A_i=[A_{i1}, \dots, A_{iN}]\in\mathbb{R}^{2\times N}$. If we define
$$ 
J_i(x_i,\{x_j\}_{j\in\mathcal{N}_i})=\frac{1}{2}\|y_i-A_i x\|^2, \ \ J(x) = \sum_{i=1}^N  J_i(x_i,\{x_j\}_{j\in\mathcal{N}_i})=\frac{1}{2}\|y-Ax\|^2\, ,
$$
with
\begin{align*}
&\nabla J(x) = A\T (Ax - y)\,, \qquad 
\nabla^2 J(x) = H= A\T A\,,\\
&H_{ij}=\sum_{\ell=1}^N A\T _{\ell i}A_{\ell j}= \sum_{\ell\in\mathcal{N}_i^+} A\T _{\ell i}A_{\ell j} = \sum_{\ell\in(\mathcal{N}_i^+ \cap \mathcal{N}_j^+)} A\T _{\ell i}A_{\ell j} 
\end{align*} 
the optimal (centralized) least squares solution\footnote{The formulation can be extended to the weighed least square solutions if noise with different variances $R$ are included which would lead to the solution $x^*=(A\T R^{-1}A)^{-1}A\T R^{-1}y$, but for the sake of clarity in the notation of this section, it is omitted.} is given by:
$$
x^* =\mathrm{argmin}_{x} J(x) = (A\T A)^{-1}A\T y\, . 
$$
A standard approach to asymptotically obtain the optimal solution is to employ an iterative algorithm based on the generalized gradient descent:
$$ 
x^+ = x -\epsilon D^{-1}A\T (Ax-y)= x -\epsilon D^{-1} \nabla J(x)=  x -\epsilon D^{-1}(Hx - A\T y)\, ,
$$
where $\epsilon$ is a suitable stepsize and $D$ is a strictly positive definite matrix, i.e. $D>0$. A typical way to solve the previous update in a distributed fashion is to pick a block-diagonal matrix $D$, i.e. $D=\blkdiag(D_1,\ldots,D_N)$, so that the previous centralized update can be written as
\begin{eqnarray} 
x_i^+ &= &x_i - \epsilon D_i^{-1}(\sum_{j=1}^N H_{ij}x_j - \sum_{j=1}^N A_{ji}\T y_j)\notag\\ 
&=& x_i - \epsilon D_i^{-1}(\sum_{j=1}^N \sum_{\ell=1}^N A\T _{\ell i}A_{\ell j} x_j - \sum_{j=1}^N A_{ji}\T y_j) \notag\\
 &=& x_i - \epsilon D_i^{-1}( \sum_{j\in \mathcal{N}_\ell^+, \forall \ell\in  \mathcal{N}^+_i} A\T _{\ell i}A_{\ell j} x_j - \sum_{j\in\mathcal{N}^+_i} A_{ji}\T y_j)\,, \label{eqn:GGD_synch}
\end{eqnarray}
where we exploited the property that $A_{ij}=0$ if $j\notin \mathcal{N}_i^+$. While the second summation involves only measurements that belongs to the neighbours of node $i$, the first summation requires the node $i$ to collect the state variables $x_j$ that belongs to the neighbours of the neighbours. As so, this implementation is not really distributed, since two-hop communication is required. Although this is not impossible from a practical perspective, it requires substantial additional communication and synchronization efforts. An alternative approach which allows the implementation of a truly distributed algorithm is to create the additional local variable at each node~$i$:
$$
z_i = A_i x_i = \sum_{j\in\mathcal{N}_i^+}A_{ij} x_j\, ,\quad  \forall i\,,
$$
which can be collected in the vector $z=[z_1\T,  \dots, z\T _N]\T $, so that in matrix form the previous expression can be written as $z=Ax$. With this notation the generalized gradient descent can be written as:
\begin{eqnarray*}
z_i^+ &=& \sum_{j\in\mathcal{N}_i^+}A_{ij} x_j \\
x^+_i &=& x_i - \epsilon D_i^{-1}(\sum_{\ell=1}^N A\T _{\ell i} \underbrace{\sum_{j=1}^N A_{\ell j} x_j}_{z_\ell} - \sum_{j=1}^N A_{ji}\T y_j)\notag\\ 
&=& x_i - \epsilon D_i^{-1}\sum_{j\in\mathcal{N}^+_i} A_{ji}\T (z^+_i - y_i)\, .
\end{eqnarray*} 
This alternative solution requires two communication rounds to compute $x^+_i$, since first it is necessary to send the $x_i$ to compute $z_i^+$, and then to transmit $z_i$ to the neighbours\footnote{It is necessary to transmit the measurements $y_i$ only at the initialization phase since they do not change during the course of the evolution of the algorithm.}. In practical scenarios, such as using PLC protocols, synchronization of transmissions and updates can be difficult. Moreover packet losses might occur, i.e. some messages from the neighbours might not be received. A naive solution to this problem, is to use local registers that keep in memory the latest message received from the neighbours, and then use these values whenever an update of the local variables $x_i,z_i$ is needed. It can be shown that this is equivalent to a scenario where every node $j\in \mathcal{N}_i$ use a delayed version of the local variables $x_i,z_i$. Since the variables $z_i$ are function of the (possibly delayed) state variables $x_i$, the variables $x_i$ are themselves functions of the delayed version of variables $x_i$ of the network. More specifically, it can be shown that the previous update equations can be written as
\begin{eqnarray}
z_i(t+1) &=& \sum_{j\in\mathcal{N}_i^+}A_{ij} x_j(\tau'_{ij}(t))\, , \\
x_i(t+1) &=& x_i(t) - \epsilon D_i^{-1}\big(\sum_{\ell=1}^N A\T _{\ell i} \sum_{j=1}^N A_{\ell j} x_j(\tau_{\ell j}(t)) - \sum_{j=1}^N A_{ji}\T y_j \big)\, . \label{eqn:GGD_asych_new}
\end{eqnarray}
where $0\leq \tau_{ij}(t),\tau'_{ij}(t)\leq k$ represent the delay of each variable which depends on the specific sequence of packet losses and variable updates, and explicitly included the time dependency of each variable. Note that in the last equation the variable $x_j$ might appear with multiple instances with different delays into the update of the variable $x_i$, i.e. it is not possible to write the evolution of variables of the original generalized gradient descent algorithm given in Eqn.~\eqref{eqn:GGD_synch} as a \emph{partially asynchronous iterative methods}  (see chapter 7 of \cite{BD:TP:1989}), for which en extensive body of literature exists, since the cited framework would require the algorithm to be written as:
\begin{eqnarray} x_i(t+1) &= & x_i(t) - \epsilon D_i^{-1}\big(\sum_{j=1}^N H_{ij} x_j(\tau_{ij}(t)) - \sum_{j=1}^N A_{ji}\T y_j \big)\, .\label{eqn:GGD_asynch}
\end{eqnarray}
Motivated by this observation, in this work we will propose an alternative mathematical machinery based on Lyapunov theory and the separation of time scale principle to prove convergence of the asynchronous algorithm~\eqref{eqn:GGD_asych_new} for a sufficiently small stepsize $\epsilon$. Note that this machinery can also be applied to more general convex problems. This is useful, for example, in the presence of outliers or sensor faults in order to develop more robust estimators than least squares. In fact, a common way to enforce robustness in the estimation is to replace  the quadratic cost function defined above with the 1-norm of the residuals, that is 
\begin{equation}\label{eq:1norm}
J_i(x_i,\{x_j\}_{j\in\mathcal{N}_i})=\|y_i-A_i x\|_1\, .
\end{equation}
However, since \eqref{eq:1norm} is not differentiable, it cannot be directly used with our algorithm. To deal with this issue, in the Simulation section \ref{sec:simulations}, we will exploit the following modification of the 1-norm \cite{Argaez2011ell1norm}
\begin{equation}\label{eq:modified1norm}
\robustCost{\cdot}\ : \R^n\to\R\ ,\ \ x\mapsto \robustCost{x} :=\sum_{i=1}^N \sqrt{x_i^2 + \nu}\, ,
%\robustCost{\cdot}\ : \R^n\to\R\ ,\ x\mapsto\robustCost{x} :=\ \sum_{i\in\mathcal{S}} \sqrt{x_i^2 + \nu} + \sum_{i\in\overline{\mathcal{S}}} \alpha x_i^2 + \gamma\, , 
\end{equation}
where $\nu>0$ is such that the smaller the selected value of $\nu$ is, the better the approximation of the 1-norm is. In particular, the approximation of each term in the summation of the cost function is quadratic when $x_i$ belongs to a small neighborhood of $0$.\\
%
%where $\mathcal{S}:=\{i\in\V: |x_i|<\overline{x}\}$, $\overline{x}$ is a saturation point, $\overline{\mathcal{S}}=\V/\mathcal{S}$, $\alpha,\gamma$ are chosen to ensure twice continuous differentiability at $\overline{x}$; finally, $\nu>0$ is such that the smaller the selected value of $\nu$ is, the better the approximation of the 1-norm is. In particular, the approximation of each term in the summation of the cost function is quadratic when $x_i$ belongs to a small neighborhood of $0$. Note that the second summation on the right hand side is needed only to assure that the technical assumption of strict convexity is met. From a practical point of view, $\overline{x}$ can be chosen enough big such that $\forall i\in\V,\ i\in\mathcal{S}$.
%
The next Sections will then provide a fully distributed generalized gradient descent algorithm which is resilient to lossy communication.

%===============================================================================
\section{Synchronous update and reliable communication}\label{sec:sync_communication}
%===============================================================================
In this section we analyze the case of synchronous and ideal, i.e., reliable, communications among neighbors, leaving the extension to the more realistic case of unreliable communication to Section~\ref{sec:async_communication}.\\

%%% ---------------------------------------------------
%\subsection{Jacobi descent algorithm for convex programming}\label{subsec:block_jacobi}
%%% ---------------------------------------------------
\noindent Consider the optimization Problem~\eqref{eq:ConvexProblem}. In the ideal communication case, one possible choice to iteratively solve Problem \eqref{eq:ConvexProblem} is to exploit the so called \emph{generalized gradient descent} iteration
\begin{equation}
%x(t+1) = x(t) - \epsilon D(x(t))^{-1} \nabla J(x(t)) \, ,
x^+ = x - \epsilon D^{-1}(x) \nabla J(x) \, ,\qquad\qquad x(0) = x_0\, ,
\label{eq:gen_grad_descent}
\end{equation}
where $\nabla J(x):= \left[\frac{\partial J(x)}{\partial x}|_{x}\right]\T $ is the gradient of $J$ evaluated at the current value $x$, $D(x)$ is a generic positive definite matrix, possibly function of $x$ itself,  and $\epsilon$ a suitable positive constant, referred to as \emph{step size}. Observe that depending on the particular choice of $D(x)$, Eq.~\eqref{eq:gen_grad_descent} describes various types of algorithms. Indeed, if $D(x)=I$, the standard gradient descent iteration is obtained; if $D(x)$ is chosen to be diagonal with diagonal elements equal to those of the Hessian matrix, then a Jacobi descent iteration is retrieved; while, if $D(x)$ is equal to the entire Hessian, then Eq.~\eqref{eq:gen_grad_descent} returns the classical Newton-Raphson iteration.\\ 
The algorithm we propose (and describe in Section \ref{sec:async_communication}) is inspired by the particular case of \eqref{eq:gen_grad_descent}, referred to as \emph{block Jacobi}, where we choose $D(t)$ to be the \emph{block diagonal} matrix such that
\begin{equation}\label{eq:D-1}
D(x) = \mathrm{blkdiag}(D_1(x),\ldots,D_N(x))\, ,\qquad\qquad D_i(x) := \nabla^2_{ii} J(x)\, ,\quad i\in\V\, ,
\end{equation}
i.e., where each diagonal block coincides with the second order derivative of $J$ w.r.t. $x_i$. Thanks to this choice for the matrix $D$, Eq.~\eqref{eq:gen_grad_descent} can be split into partial state updates each of which equal to 
\begin{equation}\label{eq:areai_syncro_upd}
x_i^+ = x_i - \epsilon D_i^{-1}(x)\nabla_i J(x)\, ,\quad i\in\V\, .
\end{equation}
Now, it is convenient to explicitly take into account the separable structure of the cost function $J$ in order to show that each gradient block $\nabla_i J$ as well as each $D_i$ block can be computed exploiting only local information coming from agent's $i$ \emph{two-steps neighbors}, i.e., agents connected to agent $i$ by a directed path of length two. Indeed, for the gradient we have that
\begin{align}\label{eq:GradientTerm}
\nabla_i J(x) =  \sum_{j\in\N_i^+} \nabla_i J_j(\{x_k\}_{k\in\N_j^+}) =  \nabla_i J_i(x_i, \{x_j\}_{j\in\N_i}) +  \sum_{j\in\N_i}  \nabla_i J_j(x_j, \{x_k\}_{k\in\N_j}) ,
\end{align}
and it can be seen that the first term on the RHS of Eq.~\eqref{eq:GradientTerm} depends only on information coming from $j\in\N_i^+$; while, the second term depends on information coming from neighbors of node $i$ and from the neighbors of its neighbors, $k\in\N_j^+$. A similar reasoning applies to $D_i$ indeed,
\begin{align}
D_i(x)&:=\sum_{j\in\N_i^+} \nabla^2_{ii} J_j(\{x_k\}_{k\in\N_j^+})\notag\\
&=  \nabla^2_{ii} J_{i}(x_i, \{x_j\}_{j\in\N_i}) +  \sum_{j\in\N_i}  \nabla^2_{ii} J_j(x_j, \{x_k\}_{k\in\N_j})\,.\label{eq:HessTerm}
\end{align}
Again, the first term in the RHS of Eq.~\eqref{eq:HessTerm} depends only on node $i$ direct neighbors, $j\in\N_i^+$, while the second term requires information coming from the neighbors of its neighbors. 
In view of a distributed computation, we assume each agent $i\in\V$, once gathered the neighbors states $\{x_j\}_{j\in\N_i}$, can compute and store in its local memory, in addition to the state $x_i$, the following variables 
\begin{equation}\label{eq:local_memory}
\rho^{(j)}_i(x) := \nabla_j J_i(x_i,\{x_j\}_{j\in\N_i})\, ,\qquad\qquad
\xi^{(j)}_i(x) := \nabla^2_{jj} J_i(x_i,\{x_j\}_{j\in\N_i})\, ,
\end{equation}
which represent the partial components of the first and second derivatives of its local cost $J_i$ evaluated at the current state value. Observe that, since in a distributed framework each agent is assumed to have information only regarding its local cost $J_i$, the $\rho$'s and $\xi$'s variables represents the quantities which agent $i$ must compute and send to its neighbors in order to let them compute their corresponding gradient and hessian blocks. Likewise, agent $i$ needs to receive similar variables from each one of its neighbors. Indeed, thanks to Eqs.~\eqref{eq:GradientTerm}--\eqref{eq:HessTerm}, it holds that 
\begin{equation}\label{eq:LocalUpd}
\nabla_i J(x)= \sum_{j\in\N_i^+} \rho^{(i)}_j(x)\, ,\qquad\qquad
D_i(x) = \sum_{j\in\N_i^+} \xi^{(i)}_j(x)\, .
\end{equation}
As above stressed, each agent $i\in\V$, to iteratively compute \eqref{eq:areai_syncro_upd}, can perform its computations autonomously assuming it has at its disposal information coming from its two-steps neighbors. However, this presents two major drawbacks:
\begin{enumerate}
\item it clashes with a truly distributed setting which exploits the exchange of information only among one-step neighbors;
\item within successive iterations, to ensure consistency and thus convergence of the procedure to a minimizer of Problem~\eqref{eq:ConvexProblem}, all the communications must be synchronous and reliable.  
\end{enumerate}
To workaround the first issue one possible solution would be, at each iteration, to perform two communication rounds among one-step neighbors as illustratively shown in Figure~\ref{fig:sync_comm_scheme}. The first round is used to exchange the state values among neighboring agents in order them to compute all the partial information terms according to Eqs.~\eqref{eq:local_memory}--\eqref{eq:LocalUpd}; while the second round is used to communicate the computed variables in order to perform the state update as in Eq.~\eqref{eq:areai_syncro_upd}. Regarding the second issue, it necessarily enforces the use of suitable synchronization algorithms as well as re-transmission protocols in case of packet failures. What above described has been compactly written in algorithmic form as reported in Algorithm~\ref{alg:dbja} in which $\mathtt{flag_{\rm transmission}}$ denotes a variable to control communication and update among the agents. Note that, even if these might provide possible answers, it is understood they do not represent satisfactory solutions for real-world applications. Conversely, in the next section we propose a truly distributed and resilient iterative procedure which, by naturally exploiting information coming from one-step neighbors and being resilient to packet losses and communication non idealities, is much more appealing from an engineering perspective.

\begin{algorithm}
    \begin{algorithmic}[1]
   	 \REQUIRE  $x_i^o$, $\epsilon$
		\STATE $x_i \leftarrow x_i^o$
		\IF {$\mathtt{flag_{transmission}}=1$} 
		\STATE \textbf{Broadcast}: $x_i$
		\STATE \textbf{Receive}: $x_j, \  \ \forall j\in \mathcal{N}_i$
		\STATE $\rho^{(j)}_i \leftarrow  \nabla_j J_i(\{x_k\}_{k\in\N_j^+}), \ \ \forall j\in \mathcal{N}^+_i$
		\STATE $\xi^{(j)}_i \leftarrow  \nabla^2_{jj} J_i(\{x_k\}_{k\in\N_j^+}), \ \ \forall j\in \mathcal{N}^+_i$
		\STATE \textbf{Broadcast}: $\rho^{(j)}_i,\xi^{(j)}_i, \ \ \forall j\in \mathcal{N}_i$
		\STATE \textbf{Receive}: $\{\rho^{(i)}_j,\xi^{(i)}_j\}, \ \ \forall j\in \mathcal{N}_j$
		\STATE $x_i \leftarrow x_i - \epsilon \big(\sum_{j\in\mathcal{N}_i^+}  \xi^{(i)}_j \big)^{-1}\big(\sum_{j\in\mathcal{N}_i^+}  \rho^{(i)}_j \big)$
		\ENDIF
\end{algorithmic}
\caption{Distributed Block Jacobi algorithm (node $i$).}
\label{alg:dbja}
\end{algorithm}
\begin{figure}[t]
\centering
\includegraphics[width=\textwidth]{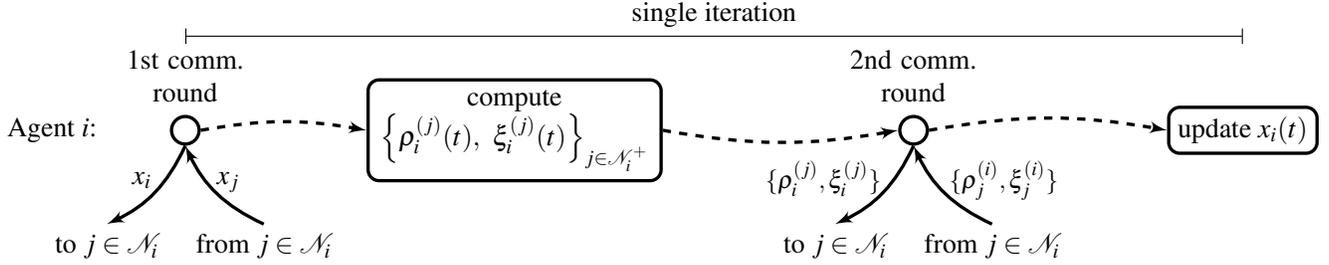}
\caption{Communication scheme to perform one single block Jacobi iteration \eqref{eq:areai_syncro_upd} in a distributed setting which assumes only information exchange among one-step neighbors.}
\label{fig:sync_comm_scheme}
\end{figure}

%===============================================================================
\section{Asynchronous updates and unreliable communication: the Resilient Block Jacobi (RBJ) algorithm}\label{sec:async_communication}
%===============================================================================
In this section we consider the more realistic case of asynchronous and unreliable communications where each agent might either receive asynchronous information coming from its neighbors, or not receive it. In particular, we present a modified iteration and analyze its corresponding iterative algorithm, which we refer to as \emph{resilient block Jacobi}, which (i) exploits only information coming from one-step neighbors; (ii) requires only one communication round per algorithmic iteration; (iii) is based on an asynchronous communication protocol; (iv) is resilient to communication failures. First, we present our algorithm for the general case of separable convex costs. Later, we particularize the algorithm to suit two special cases and showing its flexibility.\\

\noindent Consider the standard block Jacobi iteration \eqref{eq:areai_syncro_upd}. As analyzed in Section \ref{sec:sync_communication}, the procedure exhibits some fundamental criticisms which deeply compromise its distributed and asynchronous implementation and yet its robustness properties. Thus, to develop our algorithm, we need to suitably modify iteration \eqref{eq:areai_syncro_upd}. The modification we propose is apparently naive since the idea is to simply equip each agent with an additional amount of memory storage to keep track of the last received and available information corresponding to each neighbor. This additional memory is then used to perform Eq.~\eqref{eq:areai_syncro_upd}. Indeed note that, if agent $i$ does not receive some of the information coming from its neighbors, it does not have the necessary information to synchronously  compute neither \eqref{eq:local_memory} nor \eqref{eq:LocalUpd} and thus it is not able to update its state according to \eqref{eq:areai_syncro_upd}.\\
To model randomly occurring packet losses is convenient to introduce the indicator function
\begin{equation*}
\gamma^{(i)}_j(t) = \left\{\begin{array}{cl}
1 &\text{ if }i\text{ received the information sent by }j\text{ at iteration } t\\
0 &\text{ otherwise.}
\end{array}\right.
\label{eq:comm_fun}
\end{equation*}
with the assumption that $\gamma^{(i)}_i(t)=1$, since node $i$ has always access to its local variables. Then, as suggested above, the main idea is to equip each agent $i$ with auxiliary variables $\left\{\widehat{x}^{(i)}_j, \widehat{\rho}^{(i)}_j,\widehat{\xi}^{(i)}_j\right\}_{j\in\N_i}$, used to keep track of the last available information received from each neighbors. Specifically, the dynamic for the $j$-th set of additional variables is given by
\begin{equation}\label{eq:memory_dynamics}
\left\{\widehat{x}^{(i)}_j(t), \widehat{\rho}^{(i)}_j(t), \widehat{\xi}^{(i)}_j(t)\right\} = 
\left\{\begin{array}{ll}
\left\{x_j(t),\rho^{(i)}_j(t),\xi^{(i)}_j(t) \right\}\, , & \text{ if }\gamma^{(i)}_j(t) = 1\, ; \\
\\
\left\{\widehat{x}^{(i)}_j(t-1), \widehat{\rho}^{(i)}_j(t-1), \widehat{\xi}^{(i)}_j(t-1) \right\}\, , & \text{ if }\gamma^{(i)}_j(t) = 0\, .
\end{array}\right.
\end{equation}
Thanks to this additional memory at every algorithmic iteration, each agent can perform its local update which, inspired on Eq.~\eqref{eq:areai_syncro_upd}, becomes equal to
\begin{equation}\label{eq:areai_async_upd}
x_i(t+1) = x_i(t) - \epsilon\left(\sum_{j\in\N_i^+} \widehat{\xi}^{(i)}_j(t) \right)^{-1}\left(\sum_{j\in\N_i^+}\widehat{\rho}^{(i)}_j(t)\right)\, .
\end{equation}
Observe that the differences between Eqs.~\eqref{eq:areai_syncro_upd} and \eqref{eq:areai_async_upd} are mainly two:
\begin{enumerate}
\item the variables in agent $i$'s memory, used to store the first and second partial derivatives of $J_i$ w.r.t. $j\in\N_i$, are necessarily computed as
\begin{equation}\label{eq:GradHessComp_rBJ}
%g^{(i)}_j(t) = \frac{\partial J_j\left(\left\{\widehat{x}^{(j)}_k(t)\right\}_{k\in\N_j^+}\right)}{\partial x_i}\, ,\qquad\qquad 
%d^{(i)}_j(t) = \frac{\partial^2 J_j\left(\left\{\widehat{x}^{(j)}_k(t)\right\}_{k\in\N_j^+}\right)}{\partial x_i^2}\, ,
\rho^{(j)}_i(t) =  \nabla_j J_i(x_i(t),\{\widehat x^{(i)}_k(t)\}_{k\in\N_i} ),\quad 
\xi^{(j)}_i(t) = \nabla^2_{jj} J_i(x_i(t),\{\widehat x^{(i)}_k(t)\}_{k\in\N_i}),
\end{equation}
that is, they are evaluated at the last stored states' values; likewise, the values of the additional variables $\{\widehat{\rho}^{(i)}_j,\ \widehat{\xi}^{(i)}_j\}_{j\in\N_i}$ correspond to those last received from each neighbor and computed by each of them using the last available information on their neighbors' states; 
\item conversely to the synchronous implementation of the algorithm, at each iteration only one communication round is performed. This means the agents send only one packet per iteration, consisting of the state and the partial derivatives. See Figure~\ref{fig:async_comm_scheme} for an illustrative representation.
\end{enumerate}
\begin{figure}[t]
\centering
\includegraphics[width =  0.8\columnwidth]{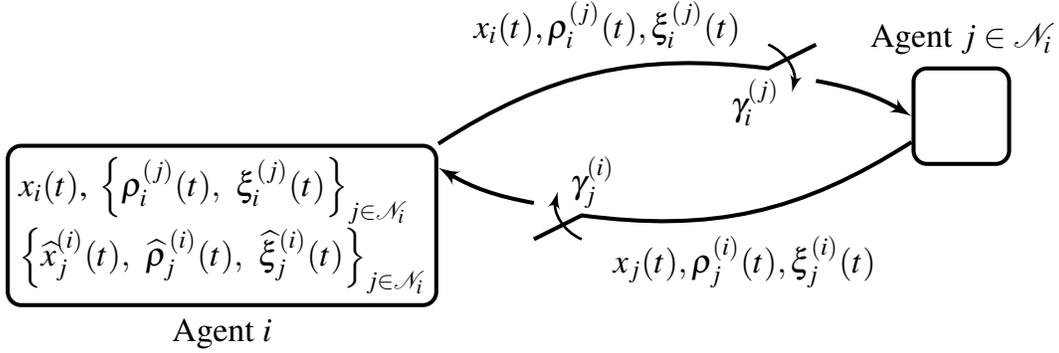}
\caption{\small Memory storage and communication scheme between pairs of neighbors agents for the RBJ algorithm.}
\label{fig:async_comm_scheme}
\end{figure}
Thanks to this simple modification the agents can perform their updates asynchronously and independently. Moreover, since only one communication round per iteration is required, both the communication burden and the number of possible communication failures are reduced. Nevertheless, it is worth stressing that, even if no packet losses occur, the classical block Jacobi and our resilient block Jacobi iteration does not exactly coincide. Indeed, in the resilient case, by sending only one packet per iteration, the state and the partial derivative information would be \tq{delayed} one from each other of one iteration if compared with the synchronous implementation.
The \emph{resilient block Jacobi} algorithm (hereafter referred to as RBJ  algorithm) for separable convex functions is formally described in Algorithm~\ref{alg:rbj} where it is presented in an event-based update performed by a generic node $i$. The variables  $\mathtt{flag_{transmission}},\mathtt{flag_{reception}},\mathtt{flag_{update}}$ are flag variables which determines which specific action a node is performing, namely transmission, reception or update. When each action is performed it cannot be interrupted, however the specific order or consecutive calls of an action do not impair the convergence of the proposed algorithm and therefore can be used independently of the specific communication protocol or CPU multitasking scheduling. 

\begin{algorithm}
    \begin{algorithmic}[1]
   	 \REQUIRE  $x_i^o$, $\epsilon$
	 	\Statex{ \bf Initialization} (atomic)
		\STATE $x_i \leftarrow x_i^o$
		\STATE $\widehat  x^{(i)}_j \leftarrow 0, \ \ \forall j\in \mathcal{N}_i $
		\STATE $ \rho^{(j)}_i \leftarrow 0, \ \ \forall j\in \mathcal{N}_i $
		\STATE $\widehat  \rho^{(i)}_j \leftarrow 0, \ \ \forall j\in \mathcal{N}_i $
		\STATE $ \xi^{(j)}_i \leftarrow I, \ \ \forall j\in \mathcal{N}_i $
		\STATE $ \widehat \xi^{(i)}_j \leftarrow I, \ \ \forall j\in \mathcal{N}_i $
		\STATE $\mathtt{flag_{transmission}}\leftarrow 1$ (optional)
		\Statex{ \bf Transmission} (atomic)
		\IF {$\mathtt{flag_{transmission}}=1$} 
		\STATE $\mathtt{transmitter\_node\_ID} \leftarrow i$
		\STATE $ \rho^{(j)}_i \leftarrow \nabla_j J_i(x_i,\{\widehat x^{(i)}_k\}_{k\in\N_i}), \ \ \forall j\in \mathcal{N}_i $
		\STATE $ \xi^{(j)}_i \leftarrow \nabla^2_{jj} J_i(x_i,\{\widehat x^{(i)}_k\}_{k\in\N_i}), \ \ \forall j\in \mathcal{N}_i $
		\STATE \textbf{Broadcast}: $\mathtt{transmitter\_node\_ID}, x_i, \{\rho^{(j)}_i,\xi^{(j)}_i\}_{j\in \mathcal{N}_i}$
		\STATE $\mathtt{flag_{transmission}}\leftarrow 0$
		\STATE $\mathtt{flag_{reception}}\leftarrow 1$ (optional)		
		\ENDIF
		\Statex{ \bf Reception} (atomic)
		\IF {$\mathtt{flag_{reception}}=1$} 
		\STATE $j\leftarrow \mathtt{transmitter\_node\_ID}$
		\STATE $ \widehat x^{(i)}_j \leftarrow  x_j $
		\STATE $ \widehat \rho^{(i)}_j \leftarrow  \rho^{(i)}_j $
		\STATE $ \widehat \xi^{(i)}_j \leftarrow  \xi^{(i)}_j $
		\STATE $\mathtt{flag_{reception}}\leftarrow 0$
		\STATE $\mathtt{flag_{update}}\leftarrow 1$ (optional)
		\ENDIF	
		\Statex{ \bf Estimate update} (atomic)
		\IF {$\mathtt{flag_{update}}=1$} 
		\STATE $\widehat  \rho^{(i)}_i \leftarrow \nabla_i J_i(x_i,\{\widehat x^{(i)}_k\}_{k\in\N_i})$
		\STATE $\widehat \xi^{(i)}_i \leftarrow \nabla^2_{ii} J_i(x_i,\{\widehat x^{(i)}_k\}_{k\in\N_i}) $
		\STATE $x_i \leftarrow x_i - \epsilon \big(\sum_{j\in\mathcal{N}_i^+}  \widehat \xi^{(i)}_j \big)^{-1}\big(\sum_{j\in\mathcal{N}_i^+} \widehat \rho^{(i)}_j \big)$
		\STATE $\mathtt{flag_{update}}\leftarrow 0$
		\STATE $\mathtt{flag_{transmission}}\leftarrow 1$ (optional)
		\ENDIF		
	
\end{algorithmic}
\caption{Resilient Block Jacobi (RBJ) Algorithm (node $i$)}
\label{alg:rbj}
\end{algorithm}
\begin{remark}[{\bf Resilient gradient descent (RGD) Algorithm}]\label{rem:rgd}
If memory, communication and computational complexity are a concern, it is possible to modify the proposed algorithm mimicking the standard gradient descent algorithm. In this framework, the second order information is not needed and therefore the variables $\xi_j^{(i)},\widehat \xi_j^{(i)}$ (\texttt{lines 5, 6, 17, 21} in Algorithm~\ref{alg:rbj}) do not need to be computed and the update for the local variable $x_i$ (\texttt{line 22} in Algorithm~\ref{alg:rbj}) should be replaced with the following:
$$
x_i \leftarrow x_i - \epsilon \sum_{j\in\mathcal{N}_i^+} \widehat \rho^{(i)}_j\, . 
$$
Obviously, the price to pay for this choice is a likely decrease in convergence speed. 
\end{remark}

\begin{remark}[{\bf Resilient Weighted Least Squares (RWLS) Algorithm}]\label{rem:rwls}
If the local cost functions are quadratic, i.e:
$$
J_i(x_i,\{x_j\}_{j\in\mathcal{N}_i})=\frac{1}{2} \|y_i-A_i x\|_{W_i}^2= \frac{1}{2}(y_i-\sum_{j\in\mathcal{N}_i^+}A_{ij}x_j)\T W_i(y_i-\sum_{j\in\mathcal{N}_i^+}A_{ij}x_j)\, , 
$$
where $W_i>0$ are the local weights, then the problem to be solved becomes a  Weighted Least Squares problem. For this special case, the gradient and the hessian components simplify to:
\begin{equation}\label{eq:local_memory_wls}
\rho^{(j)}_i(x) :=A_{ij}\T W_i(\sum_{j\in\mathcal{N}_i^+}A_{ij}x_j - y_i)\, ,\qquad\qquad
\xi^{(j)}_i(x) := A_{ij}\T W_iA_{ij}\, ,
\end{equation}
therefore the RBJ Algorithm can be simplified by substituting \texttt{lines 10} and \texttt{11} with the following updates:
\begin{eqnarray}
 \rho^{(j)}_i &\leftarrow& A_{ij}\T W_i(A_{ii}x_i + \sum_{j\in\mathcal{N}_i}A_{ij}\widehat x^{(i)}_j - y_i), \ \ \forall j\in \mathcal{N}_i \, ,\\
\xi^{(j)}_i &\leftarrow&  A_{ij}\T W_iA_{ij}\, . 
\end{eqnarray}
It is clear from the previous expression, that the algorithm could be modified by having a preliminary phase when the $\xi^{(j)}_i$ are transmitted reliably to the neighbours so that eventually $\widehat \xi^{(j)}_i =\xi^{(j)}_i$, and then the algorithm could simply transmit the variables $x_i,\rho^{(j)}_i$ and update the variables $x_i,\widehat x^{(i)}_j, \widehat \rho^{(i)}_j$  which are the only variables  that evolve over time, thus considerably reducing the communication complexity which corresponds with that of the RGD algorithm. 
\end{remark}

\subsection{Theoretical analysis of RBJ Algorithm}

\noindent Before presenting the major theoretical result characterizing the convergence properties of the proposed RBJ algorithm, we introduce the following assumption on the nature of lossy communication we consider. It mainly states that each agent $i\in\V$ receives information coming from each agent $j\in\N_i$ at least once within any window of $T$ iterations of the algorithm.
\begin{ass}[{\bf Persistent communication}]\label{ass:persistent_comm}
There exists a constant $T$ such that, for all $t\geq 0$, for all $ i \in \mathcal V$ and for all $j \in \mathcal N_i$, 
$$
\mathbb P \left[\{\gamma^{(i)}_j(t),\dots,\gamma^{(i)}_j(t+T)\} = \{0, \dots, 0\}\right] = 0 .
$$
\hfill$\square$
\end{ass}
\begin{theorem}[{\bf Local convergence of the RBJ algorithm}]\label{thm:conv_rbj}
Let Assumptions \ref{ass:convex} and \ref{ass:persistent_comm} hold. Moreover assume that the cost functions $J_i$ are three-time differentiable and continuous. Consider Problem \eqref{eq:ConvexProblem} and the RBJ algorithm. Let $x^*$ be the minimizer of
\eqref{eq:ConvexProblem}. There exists $\bar \epsilon>0$ and $\delta>0$, such that, if $0<\epsilon < \bar \epsilon$ and $\|x(0)-x^*\| < \delta$, 
then the trajectory $x(t)$, generated by the RBJ algorithm, converges exponentially fast to $x^*$, i.e.,  
$$
\| x(t)-x^*\| \leq C \rho^t
$$
for some constants $C>0$ and $0 < \rho < 1$.\hfill$\square$
\end{theorem}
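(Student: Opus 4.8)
The plan is to view the RBJ algorithm as a singularly perturbed system in which the state $x(t)$ evolves on a slow time scale (driven by the small step size $\epsilon$) and the memory variables $\{\widehat x^{(i)}_j, \widehat\rho^{(i)}_j, \widehat\xi^{(i)}_j\}$ together with the locally computed $\{\rho^{(j)}_i,\xi^{(j)}_i\}$ evolve on a fast time scale, since between two consecutive updates of $x_i$ these variables are simply refreshed with (possibly delayed) copies of the current $x$-values. The first step is to collect all variables into an extended state $\zeta(t) = (x(t), m(t))$, where $m(t)$ stacks the memory/auxiliary variables, and write the joint dynamics $x(t+1) = x(t) - \epsilon\, g(\zeta(t))$, $m(t+1) = F(\zeta(t))$, where $F$ captures the copy-on-reception rule \eqref{eq:memory_dynamics} together with the derivative evaluations \eqref{eq:GradHessComp_rBJ}. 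The key structural observation is that when $\epsilon = 0$ the $x$ component is frozen, and under Assumption~\ref{ass:persistent_comm} the map $F$, iterated over any window of length $T$, is a contraction toward the unique fixed point $m = m^*(x)$ at which every memory slot holds the exact (undelayed) value; i.e. $\widehat x^{(i)}_j = x_j$, $\widehat\rho^{(i)}_j = \nabla_i J_j(x)$, $\widehat\xi^{(i)}_j = \nabla^2_{ii}J_j(x)$. On this manifold $\{m = m^*(x)\}$ the slow dynamics reduces exactly to the nominal block Jacobi iteration \eqref{eq:areai_syncro_upd}, namely $x^+ = x - \epsilon D^{-1}(x)\nabla J(x)$.

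The second step is to establish local exponential stability of the \emph{reduced} (slow) system $x^+ = x - \epsilon D^{-1}(x)\nabla J(x)$ near $x^*$. Since $\nabla J(x^*) = 0$, linearizing gives $x^+ - x^* \approx (I - \epsilon D^{-1}(x^*)\nabla^2 J(x^*))(x - x^*)$; because $J$ is strictly convex with $\nabla^2 J(x^*) > 0$ and $D(x^*) > 0$, the matrix $D^{-1}(x^*)\nabla^2 J(x^*)$ is similar to a symmetric positive definite matrix, hence has real positive eigenvalues, so for $\epsilon$ small enough the spectral radius of $I - \epsilon D^{-1}(x^*)\nabla^2 J(x^*)$ is strictly below one. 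This yields a Lyapunov function $V(x) = (x-x^*)^\top P (x-x^*)$ for the reduced system. The third step is the singular-perturbation / two-time-scale argument proper: treat the window of $T$ iterations as one "macro-step," show that over such a window the fast variables contract toward $m^*(x)$ at a geometric rate independent of $\epsilon$ up to an $O(\epsilon)$ perturbation (the states drift by $O(\epsilon)$ during the window, so $m^*(x)$ is a moving target but moves slowly), and combine the fast Lyapunov function $W(m,x) = \|m - m^*(x)\|^2$ with the slow $V(x)$ into a composite Lyapunov function $U = V + \lambda W$ for suitable $\lambda > 0$. Standard manipulations — using three-times differentiability of $J_i$ to get local Lipschitz bounds on $g$, $F$, and $\nabla m^*$ — show that $U$ decreases geometrically along macro-steps in a neighborhood of $(x^*, m^*(x^*))$, provided $\epsilon < \bar\epsilon$ and the initialization is within $\delta$ of $x^*$ (the memory being initialized away from $m^*$ is absorbed by first letting the fast subsystem contract, which costs only a bounded transient). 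Exponential convergence of $\|x(t) - x^*\|$ with rate $\rho$ and constant $C$ then follows.

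The main obstacle I anticipate is making the two-time-scale separation rigorous \emph{in the presence of stochastic, delay-structured packet losses} rather than a clean deterministic singular perturbation: the fast map $F$ is time-varying and random (which slots get refreshed depends on the $\gamma^{(i)}_j(t)$), so one cannot simply invoke a fixed-point contraction at each step. The right device is to work over windows of length $T$ — Assumption~\ref{ass:persistent_comm} guarantees every slot is refreshed at least once per window with probability one — and show that \emph{any} admissible composition of $T$ elementary update/reception maps contracts the memory error by a uniform factor $\kappa < 1$; this makes the analysis pathwise and removes the randomness from the Lyapunov estimates. A secondary technical nuisance is that, as the authors themselves note, even with no losses the RBJ iteration is not literally the block Jacobi iteration (state and derivative information are offset by one iteration), so the "reduced system" on the slow manifold must be identified carefully — one must check that the extra one-step offset is itself an $O(\epsilon)$ perturbation of \eqref{eq:areai_syncro_upd} and does not disturb the spectral-radius bound. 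Both issues are handled by absorbing the offending terms into the $O(\epsilon)$ remainder and choosing $\bar\epsilon$ small enough that the composite Lyapunov decrease survives.
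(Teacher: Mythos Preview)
Your proposal is correct and lands on the same conceptual engine as the paper --- a two-time-scale separation between the slow state $x$ and the fast memory variables, with Assumption~\ref{ass:persistent_comm} guaranteeing that the fast subsystem settles within any window of length $T$ --- but the execution you sketch differs from the paper's in one structural respect that is worth noting.

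The paper does \emph{not} run a nonlinear singular-perturbation argument with a composite Lyapunov function $U=V+\lambda W$ on the original dynamics. Instead it first \emph{linearizes the full extended system} $(x,\widehat x,\widehat\rho,\widehat\xi)$ around its equilibrium $(x^*,\widehat x^*,\widehat\rho^*,\widehat\xi^*)$, obtaining a linear time-varying system with the block structure
\[
\begin{bmatrix}\psi(t+1)\\ y(t+1)\end{bmatrix}
=
\begin{bmatrix} I & -\epsilon B\\ C(t) & F(t)\end{bmatrix}
\begin{bmatrix}\psi(t)\\ y(t)\end{bmatrix},
\]
and then invokes a bespoke time-scale lemma for this linear form. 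That lemma asks for (i) a linear map $y=G\psi$ compatible with $y=C(t)\psi+F(t)y$, (ii) exponential stability of $z(t+1)=F(t)z(t)$, and (iii) exponential stability of the \emph{continuous-time} averaged system $\dot\psi=-BG\psi$. The fast stability (ii) is established not by a contraction factor $\kappa<1$ per window but by the stronger observation that the fast dynamics is \emph{nilpotent}: the lower-triangular $F(t)$ have diagonal blocks that vanish in product after one window, so $\Omega(2T+1)=0$ exactly. The slow stability (iii) is checked via $BG=(\mathrm{diag}\,H)^{-1}H$ with $H=\nabla^2 J(x^*)$ and the Lyapunov function $V(\psi)=\tfrac12\psi^\top H\psi$. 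The composite Lyapunov construction you describe does appear, but only \emph{inside the proof of the linear lemma}, after the change of variables $z=y-G\psi$, and with a time-varying $P_z(t)$ coming from a converse Lyapunov theorem.

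What each route buys: the paper's linearize-first strategy cleanly isolates the time-variance and randomness into the matrices $C(t),F(t)$, so the composite-Lyapunov estimate is purely algebraic and the one-step offset you worry about is automatically absorbed into the block structure (it shows up as the cascade $F_2(t)$ feeding $\widehat\psi$ into $\widehat\eta$, which is why the nilpotency horizon is $2T+1$ rather than $T$). Your nonlinear route is more standard singular-perturbation fare and would also work, but requires explicit Lipschitz bookkeeping on $g$, $F$, and $\nabla m^*$ that the paper sidesteps by linearizing upfront; it also needs the macro-step device you mention, whereas the paper's lemma handles the per-step dynamics directly. Neither approach has a gap; they meet at the same composite Lyapunov inequality, just at different stages of the argument.
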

\noindent The proof of Theorem \ref{thm:conv_rbj} can be found in Appendix \ref{apx_proofThm}, and basically relies on separation of time scales principle between the dynamics of the states $x_i$'s and those of the auxiliary variables $\widehat{x}_j^{(i)}$'s, $\rho^{(i)}_j$'s, $\widehat{\rho}_i^{(j)}$'s, $\xi^{(i)}_j$'s and $\widehat{\xi}_i^{(j)}$'s.
Loosely speaking, the result builds on the idea that if the step-size $\epsilon$ is small enough, the variation of the true states $x_i$'s is sufficiently slow and, despite the lossy communication, the values of the auxiliary variables stored in memory equal the true values.

\begin{remark}[{\bf Local convergence of the RGD algorithm}]
The same argument used in the previous theorem can be applied to the Robust Gradient Descent Algorithm presented in Remark~\ref{rem:rgd} above under the weaker assumption that the cost functions $J_i$ are two-time differentiable, thus providing the same local exponential convergence. Typically, the critical value $\bar \epsilon$ for the RGD algorithm is smaller than that of the RBJ algorithm, and consequently also the rate of convergence is slower.\hfill$\square$ 
\end{remark}

\begin{lemma}[{\bf Global convergence RWLS algorithm}]\label{prop:conv_rj_quadratic}
Let Assumptions \ref{ass:convex} and \ref{ass:persistent_comm} hold. Consider Problem \eqref{eq:ConvexProblem} with a quadratic cost function $J(x)$ and the RWLS algorithm. There exists $\bar \epsilon$ such that, if $0<\epsilon < \bar \epsilon$, then, for any $x(0)\in \R^{n}$, the trajectory $x(t)$, generated by the RWLS algorithm, converges exponentially fast to the minimizer $x^*$ of the corresponding problem, i.e.,  
$$
\| x(t)-x^*\| \leq C \rho^t
$$
for some constants $C>0$ and $0 < \rho < 1$.\hfill$\square$
\end{lemma}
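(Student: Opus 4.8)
The plan is to exploit the single structural simplification of the quadratic case: the whole RWLS recursion is \emph{affine}, hence the error dynamics is an exactly \emph{linear} switched system, for which local and global exponential stability coincide. First I would observe that when $J$ is quadratic the Hessian $H:=\nabla^2 J$ is constant, and by \eqref{eq:local_memory_wls} the second--order variables $\xi^{(j)}_i=A_{ij}\T W_iA_{ij}$ are constant as well; hence, by Assumption~\ref{ass:persistent_comm}, after at most $T$ iterations each $\widehat\xi^{(i)}_j$ has been refreshed once and then stays frozen, so for $t\geq T$ the block matrix appearing in \eqref{eq:areai_async_upd} coincides with the constant $D:=\blkdiag(\nabla^2_{11}J,\dots,\nabla^2_{NN}J)$, which is positive definite because it is the block diagonal of the positive definite Hessian (Assumption~\ref{ass:convex}); one also checks it stays uniformly invertible during the $T$--step transient, so it is asymptotically irrelevant. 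Collecting the genuinely time--varying variables into $\zeta(t):=\big(x(t),\{\widehat x^{(i)}_j(t)\}_{j\in\N_i},\{\widehat\rho^{(i)}_j(t)\}_{j\in\N_i}\big)$, equations \eqref{eq:memory_dynamics}, \eqref{eq:areai_async_upd} and \eqref{eq:GradHessComp_rBJ} take the form
\[
\zeta(t+1)=\Phi_{\gamma(t)}(\epsilon)\,\zeta(t)+\epsilon\, b_{\gamma(t)}\,,
\]
with $\Phi_\gamma(\epsilon)$ affine in $\epsilon$ and $\gamma(t)$ ranging over a \emph{finite} set of loss patterns. Since $\nabla J(x^*)=0$, the point $\zeta^\star:=\big(x^*,\{x^*_j\},\{\rho^{(i)}_j(x^*)\}\big)$ is a fixed point for \emph{every} $\epsilon$, so the error $e(t):=\zeta(t)-\zeta^\star$ satisfies the homogeneous switched linear recursion $e(t+1)=\Phi_{\gamma(t)}(\epsilon)e(t)$.

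From here I would argue in two complementary ways. The quick route is to note that a quadratic $J$ satisfies all hypotheses of Theorem~\ref{thm:conv_rbj} (strict convexity and radial unboundedness from Assumption~\ref{ass:convex}, infinite differentiability for free), so the theorem gives exponential convergence whenever $0<\epsilon<\bar\epsilon$ and $\|x(0)-x^*\|<\delta$; but since $e(t+1)=\Phi_{\gamma(t)}(\epsilon)e(t)$ is linear, convergence on a ball upgrades to convergence from every $x(0)\in\R^{n}$ by rescaling the initial condition, with the \emph{same} $\bar\epsilon$. The route that actually explains why this works — and the one I would write out — is to apply the separation--of--time--scales principle directly to the linear recursion: at $\epsilon=0$, $\Phi_\gamma(0)$ leaves $x$ unchanged and only refreshes/holds the memory variables, so under Assumption~\ref{ass:persistent_comm} the composite $K$--step map $\Psi_\gamma(0)$, with $K=\mathcal O(T)$, is a \emph{projection} onto the ``consistency subspace'' $\mathcal S\cong\R^{n}$ on which $\widehat x^{(i)}_j=x_j$ and $\widehat\rho^{(i)}_j=\rho^{(i)}_j(x)$; this projection is semisimple with spectrum in $\{0,1\}$. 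Restricted to $\mathcal S$, the first--order--in--$\epsilon$ term reproduces the nominal block--Jacobi step $x^{+}-x^*=(I-\epsilon D^{-1}H)(x-x^*)$, and since $D^{-1}H$ is similar to the symmetric positive definite $D^{-1/2}HD^{-1/2}$ its spectrum is real and positive, so $\rho(I-\epsilon D^{-1}H)<1$ for $0<\epsilon<2/\lambda_{\max}(D^{-1}H)$.

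Finally I would combine these through a standard linear singular--perturbation estimate: writing $\Psi_\gamma(\epsilon)=\Psi_\gamma(0)+\epsilon R_\gamma+\mathcal O(\epsilon^2)$, the ``slow'' eigenvalues of $\Psi_\gamma(\epsilon)$ are $1-\epsilon K\lambda_k+\mathcal O(\epsilon^2)$ (with $\lambda_k$ the eigenvalues of $D^{-1}H$) while the complementary block stays $\mathcal O(\epsilon)$--small; since $\gamma$ runs over a finite alphabet, only finitely many admissible window products occur, which yields a \emph{uniform} bound $\|\Psi_\gamma(\epsilon)\|\leq 1-c\epsilon$ in a suitable norm for all $0<\epsilon<\bar\epsilon$. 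Iterating over consecutive windows gives $\|e(mK)\|\leq(1-c\epsilon)^{m}\|e(0)\|$, hence $\|x(t)-x^*\|\leq\|e(t)\|\leq C\rho^{t}$ with $\rho:=(1-c\epsilon)^{1/K}<1$ and $C$ absorbing the bounded intra--window growth. The hard part will be making this uniform--over--switching two--time--scale estimate precise: specifically, verifying that $\Psi_\gamma(0)$ is a \emph{semisimple} projection with no Jordan block on the unit circle — which is exactly what persistent communication buys — and that the $\mathcal O(\epsilon)$ coupling between $\mathcal S$ and its fast complement cannot destroy the contraction. The remaining steps (uniform invertibility of the block matrix during the $T$--step transient, affine dependence on $\epsilon$, finiteness of the loss alphabet) are routine.
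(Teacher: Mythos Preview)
The paper does not actually supply a separate proof of Lemma~\ref{prop:conv_rj_quadratic}; it states the result and leaves the argument implicit in the machinery already developed for Theorem~\ref{thm:conv_rbj}. Your proposal is correct and is precisely the argument one expects: in the quadratic case the RWLS error dynamics is an \emph{exactly} linear (switched) system rather than a linearization, so the local exponential stability produced by Lemma~\ref{lemma} in the proof of Theorem~\ref{thm:conv_rbj} is automatically global. Your ``quick route'' --- invoke Theorem~\ref{thm:conv_rbj}, then rescale the initial condition using homogeneity of the linear error recursion $e(t+1)=\Phi_{\gamma(t)}(\epsilon)e(t)$ --- is the cleanest way to phrase this and matches the spirit of the paper exactly.

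Two minor remarks on presentation. First, your observation that the $\widehat\xi$ variables freeze to the true constant Hessian blocks after at most $T$ steps is the right simplification; in the paper's general proof these variables survive in the linearized dynamics (the $\widehat\zeta$ block) but decouple because $\partial f_1^i/\partial\widehat e$ vanishes at the optimum --- in the quadratic case they decouple for the more elementary reason you give. Second, the ``detailed route'' you sketch (semisimple projection $\Psi_\gamma(0)$, first--order eigenvalue perturbation along the consistency subspace, uniform contraction over the finite switching alphabet) is essentially a restatement of Lemma~\ref{lemma} specialized to this setting; it is correct but not needed once you take the quick route, since Lemma~\ref{lemma} has already been proved in the paper and applies verbatim to the linear system \eqref{eq:dynamic_optalg}, which here coincides with the true dynamics.
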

%

%===============================================================================
\section{Simulations}\label{sec:simulations}
%===============================================================================
In this section we present some simulative results obtained using the RBJ algorithm. The simulations involve the IEEE 123 nodes distribution grid benchmark (see \cite{BS:CR:TM:2014}). The problem we address is the robust estimation of the voltage level at each node of the grid (except the PCC node which is assumed fixed and known) from voltage and current measurements in the presence of measurements outliers. We recall that voltages and currents in an AC power distribution grid are complex values. However, in view of the state estimation problem we consider, it is convenient to exploit an equivalent standard reformulation in rectangular coordinates. In particular, given the complex vectors of voltages and currents, denoted as $\mathbf{v}\in\C^{122}$ and $\mathbf{i}^c\in\C^{122}$ respectively, and the weighted Laplacian matrix $\mathcal{L}\in\C^{122\times 122}$ describing the electric grid, thanks to Kirchhoff's voltage and current laws, it holds that
\begin{equation}\label{eq:kirchhoff}
\mathbf{i}^c= \mathcal{L} \mathbf{v}.
\end{equation}
However, by rewriting voltage and currents in rectangular coordinates as 
$$
v:=[\Re(\mathbf{v})\T \ \Im(\mathbf{v})\T ]\T \in\R^{244}\, ,\quad 
i^c := [\Re(\mathbf{i}^c)\T \ \Im(\mathbf{i}^c)\T ]\T \in\R^{244}\, .
$$
and, similarly, by splitting $\mathcal{L}$ into its real and imaginary parts as 
\[
L=\begin{bmatrix}
\Re(\mathcal{L}) & -\Im(\mathcal{L}) \\ \Im(\mathcal{L}) & \Re(\mathcal{L})
\end{bmatrix}\, ,
\]
Eq.~\eqref{eq:kirchhoff} is equivalent to 
$$
i^c = Lv\, .
$$
Thus, by assuming to collect both current and voltage measurements directly in rectangular coordinates\footnote{According to future smart grids paradigm, it is assumed each node of the grid to be equipped with a smart measurement units, e.g., a Phasor Measurement Unit (PMU), which can return measurements of current and voltage. Usually, electric quantities are measured in polar coordinates. However, for the sake of simplicity, we assume to have at our disposal measurements directly in rectangular coordinates, stressing that, thanks to a suitable linearization, it is always possible to pass from polar to rectangular coordinates.}, our measurement model reads as
\[\begin{bmatrix}
y^v \\y^{ i^c}
\end{bmatrix} = \begin{bmatrix}
I \\ L
\end{bmatrix}v + 
\begin{bmatrix}
w^v \\w^{i^c}
\end{bmatrix} + 
\begin{bmatrix}
o^v \\ o^{i^c}
\end{bmatrix}\, ,\qquad
\begin{bmatrix}
w^v \\w^{i^c}
\end{bmatrix} \sim 
\N\left(
\begin{bmatrix}
0 \\ 0
\end{bmatrix},
\begin{bmatrix}
\sigma_v^2\diag(|v|)  & \\ & \sigma^2_{i^c}\diag(|i^c|)
\end{bmatrix}
\right)\, ,
\]
where $I\in\R^{244}$ is the identity matrix, $y^v,y^{i^c}\in R^{244}$ are the measurements, which we collect in vector $y\in\R^{488}$, $w^v,w^{i_c}\in\R^{244}$ are the measurements' noise, and  $o^v,o^{i^c}\in\R^{244}$ are sparse vectors which contain possible measurement outliers. We choose\footnote{The choice for the measurements error standard deviations is dictated by the fact that the de facto standard for modern PMUs requires at most a $0.1\%$ error in the voltage measurements. This translates in a current error of more or less $10\%$.} $\sigma_v=10^{-3}$[p.u.] and $\sigma_{i^c}=10^{-1}$[p.u.].  
Finally, concerning the outliers, $10\%$ of the measurements are corrupted, and the distribution of the outliers is uniform between $1/100$ and $1/80$ of the respective measurement for voltages and between $1/2$ and $1$ of the respective current measurement.\\ 
As suggested at the end of Section~\ref{sec:example}, to perform robust state estimation in the presence either of measurements faults or outliers, one interesting choice for the cost function is the modified 1-norm defined is Eq.~\eqref{eq:modified1norm} as
$$
\robustCost{r}
$$
where $r=y-Av$ are the measurements residuals with $A = [I\,\,\,\, L\T ]\T $.
To run the RBJ algorithm we need to identify some partition of the grid. To do so, the feeder is divided into $N$ non overlapping areas, and a computing unit, which can collect the measurements of the nodes belonging to the area and can run the algorithm, is associated to each area. An example of the division in areas is given in Figure \ref{fig:areasDivision}. The communication graph $\G$ can be obtained from the division in areas, and in particular, two units can communicate with each other if the  two areas are physically connected (that is if there exists two nodes, each one belonging to one of the areas, which are connected by an electric wire). The vectors $y,x$ and $r$ and matrix $A$ are divided according to the partition of the nodes in areas. 
Given the cost function, the values of $\rho_j^{(i)}$ and $\xi_j^{(i)}$ are computed as:
\begin{align*} \rho_j^{(i)} &= {A_{ji}^\top \left(\left(\diag(r_j(t))\right)^2\! +\! \nu I\right)^{-1/2}r_{j}(t)}, \quad \forall\ j\in\N_i^+\\
\xi_j^{(i)} & = \nu H_{ji}\T  \left(\left(\diag(r_j(t))\right)^2 + \nu I \right)^{-3/2} H_{ji}, \quad \forall\ j\in\N_i^+.
\end{align*}
We tested the RBJ algorithm under two different scenarios to evaluate the influence of different parameters involved in the algorithm. All the results showed are obtained averaging over 100 Monte Carlo runs (MCR). 
\begin{figure}
\begin{center}
\includegraphics[scale=0.7]{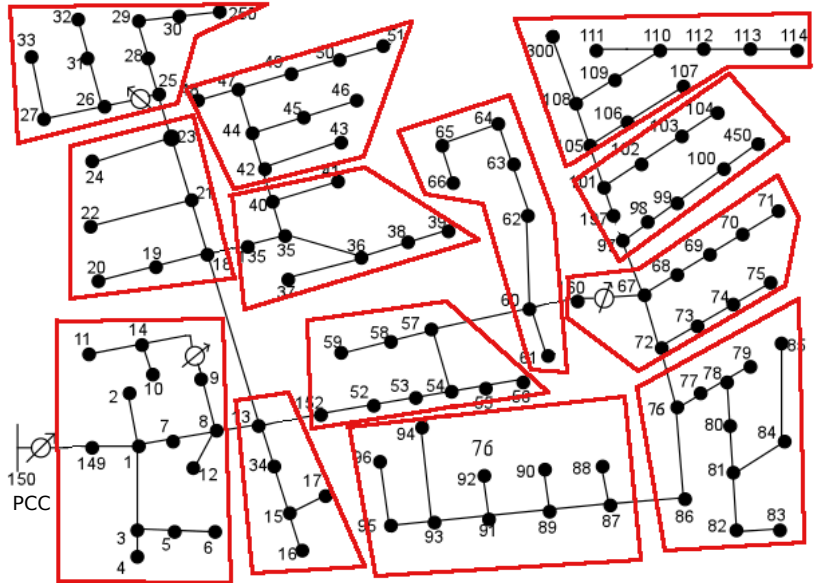}
\caption{Division in $N = 13$ areas of the IEEE 123  nodes distribution grid.}
\label{fig:areasDivision}
\end{center}
\end{figure}

In the first considered scenario we study the influence of the number $N$ of areas on the performance of the algorithm. Observe that, for the case $N=1$ the proposed RBJ algorithm resembles a Newton-Raphson iteration. Thus, in general and as shown in Figure~\ref{fig:differentN}, the fewer the number of areas, the faster the convergence rate.
\begin{figure}
\begin{center}
\includegraphics[scale=0.8]{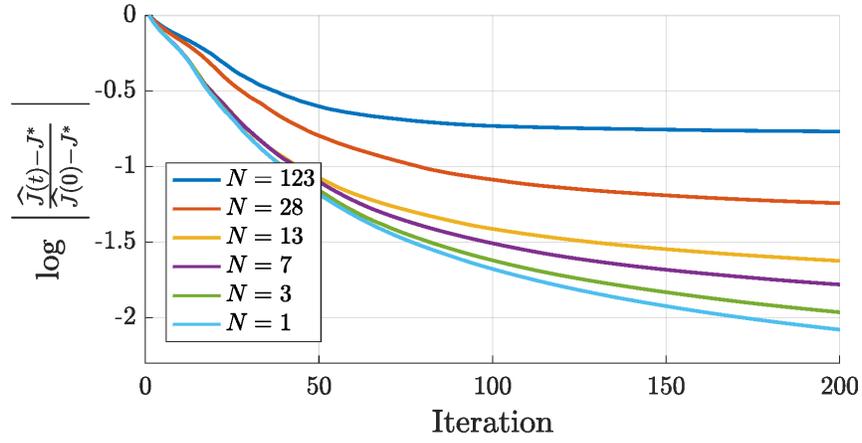}
\caption{Normalized cost function as a function of the iteration, for different numbers of areas $N$. In all simulations there is a packet loss probability of $30\%$ and $\epsilon=0.0004$. The results are obtained averaging over 100 MCR.}
\label{fig:differentN}
\end{center}
\end{figure}
In the second scenario we analyze the influence of the step size $\epsilon$ on the convergence rate. As can be seen from Figure \ref{fig:differentEps}, we can infer that the convergence rate improves for bigger values of $\epsilon$. Nevertheless, it is important to highlight the fact that that the algorithm may diverge if the selected $\epsilon$ is a too large.
\begin{figure}
\begin{center}
\includegraphics[scale=.8]{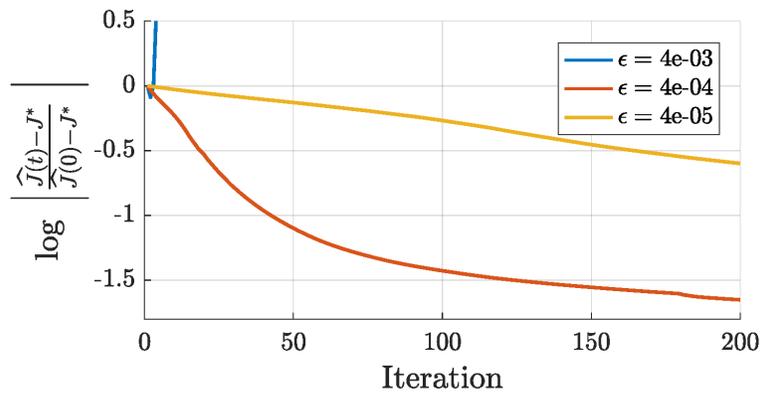}
\caption{Normalized cost function as a function of the iteration, for different values of the parameter $\epsilon$. The number of areas used is $N = 13$ and the results are obtained averaging over 100 MCR. In all simulations there is a packet loss probability of $30\%$.}
\label{fig:differentEps}
\end{center}
\end{figure}
Finally, we have considered a third scenario, not reported reported here, in  which the robustness of the algorithm is tested for increasing values of the packet loss probability. The results show that the algorithm is really robust to packet losses as can be seen both from Figures~\ref{fig:differentN}--\ref{fig:differentEps} where the algorithm has been tested considering a $30\%$ packet loss probability. In particular, since the curves obtained are really close to each other, we decided not to show the results of the simulations. As last remark regarding the packet loss scenario, it is numerically observed that the higher the packet loss probability, the smaller the value of the step size to ensure convergence of the algorithm. Therefore, in the choice of the step size of the algorithm, which is still an open problem, the degree of reliability of the communication network must be considered.

%===============================================================================
\section{Conclusions}\label{sec:conclusion}
%===============================================================================
Considering the emerging area of large-scale multi-agent systems, this paper addressed the always more timely problem of unconstrained robust distributed convex optimization in the presence of communication non idealities. In particular, we analyzed a particular class of locally coupled cost functions which arise in diverse interesting engineering problems such as multi-area state estimation of smart electric grids and multi-robot localization, just to mention two possible applications. We considered a particularly flexible partition-based communication architecture which seamlessly accounts for peer-to-peer and wide-area communications. We proposed a generalized gradient algorithm based on the well-known Jacobi iteration. By leveraging Lyapunov theory and separation of time scale principle, we proved robustness of the algorithm to packet drops and communication failures. Finally, we extensively tested the proposed solution for robust state estimation in the presence of measurements outliers using as benchmark the standard IEEE 123 nodes distribution feeder.

%===============================================================================
\appendix
\section{Proof of Theorem \ref{thm:conv_rbj} }\label{apx_proofThm}
%===============================================================================
The proof of Theorem \ref{thm:conv_rbj} relies on the time scale separation of the dynamic of the $x_i$'s and of the auxiliary variables $\widehat x_j^{(i)}$'s, $\widehat \rho_j^{(i)}$'s and $\widehat \xi_j^{(i)}$'s, and fully exploits the following Lemma
\begin{lemma}[Time scale separation principle for discrete time dynamical systems]\label{lemma}
Consider the dynamical system
\begin{equation}
\begin{bmatrix}
x(t+1)\\y(t+1)
\end{bmatrix} = 
\begin{bmatrix}
I& -\epsilon B\\ C(t) & F(t)
\end{bmatrix} 
\begin{bmatrix}
x(t)\\y(t)
\end{bmatrix}.
\label{eq:dynsys_lem}
\end{equation}
Let the following assumptions hold
\begin{enumerate}
\item There exists a matrix $G$ such that $y = Gx$ satisfies the expression $y = C(t) x + F(t) y,\  \forall t,\forall x$
\item the system 
\begin{equation}
z (t+1) = F(t) z(t) 
\label{eq:z(t+1)}
\end{equation}
is exponentially stable;
\item the system 
\begin{equation}
\dot x(t) = - B G x(t)
\label{eq:xi}
\end{equation}
is exponentially stable.

\item The matrices $C(t)$ and $F(t)$ are bounded, i.e. there exists $m>0$ such that $\|C(t)\|<m, \|F(t)\|<m, \forall t\geq 0$.

\end{enumerate}
Then, there exists $\bar \epsilon$, with $0<\epsilon<\bar \epsilon$ such that the origin is an exponentially stable equilibrium for the system \eqref{eq:dynsys_lem}. \hfill$\square$
\end{lemma}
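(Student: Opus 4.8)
The idea is the standard two-time-scale reduction: change coordinates so that the ``fast'' variable becomes the deviation of $y$ from its quasi-stationary value $Gx$, then certify stability with a composite Lyapunov function. First I introduce $e(t) := y(t) - G x(t)$. Assumption~1, written as $C(t) = (I-F(t))G$ for all $t$, makes the cross terms cancel, and \eqref{eq:dynsys_lem} becomes
\begin{equation*}
\begin{aligned}
x(t+1) &= (I - \epsilon BG)\,x(t) - \epsilon B\,e(t),\\
e(t+1) &= \bigl(F(t) + \epsilon GB\bigr)\,e(t) + \epsilon\,GBG\,x(t).
\end{aligned}
\end{equation*}
The key structural observation is that in these coordinates both couplings carry a factor $\epsilon$, so the system is an $O(\epsilon)$ perturbation of the decoupled pair formed by the fast subsystem $e(t+1)=F(t)e(t)$ and the slow subsystem $x(t+1)=(I-\epsilon BG)x(t)$; note the latter is exactly the forward-Euler discretization of \eqref{eq:xi} with step $\epsilon$.

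\textbf{Lyapunov certificates for the frozen subsystems.} For the slow part, exponential stability of \eqref{eq:xi} means $-BG$ is Hurwitz, so the Lyapunov equation $(BG)\T P + P(BG) = I$ has a unique solution $P\succ 0$; expanding $(I-\epsilon BG)\T P(I-\epsilon BG) = P - \epsilon I + \epsilon^2 (BG)\T P(BG)$ shows that $V_x(x):=x\T P x$ decreases by a factor $1-\eta_1\epsilon$ along $x(t+1)=(I-\epsilon BG)x(t)$, for some $\eta_1>0$ and $\epsilon$ small. For the fast part, reading Assumption~2 as uniform exponential stability of $z(t+1)=F(t)z(t)$ and using Assumption~4, the converse-Lyapunov matrix $Q(t):=\sum_{k\ge t}\Phi_F(k,t)\T \Phi_F(k,t)$ (with $\Phi_F$ the transition matrix of $F(\cdot)$) is well defined, satisfies $I \preceq Q(t) \preceq \bar q I$ and $F(t)\T Q(t+1)F(t) - Q(t) = -I$; perturbing by $\epsilon GB$ and bounding with $\|F(t)\|\le m$ gives $(F(t)+\epsilon GB)\T Q(t+1)(F(t)+\epsilon GB) - Q(t) \preceq -\tfrac12 I$ for $\epsilon$ small, so $V_e(e,t):=e\T Q(t)e$ contracts with an $\epsilon$-independent factor $1-\eta_2$, $\eta_2>0$.

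\textbf{Composite Lyapunov function.} I then take $V(x,e,t) := V_x(x) + V_e(e,t)$ and evaluate it along the coupled dynamics. The cross contributions generated by the $-\epsilon B e$ and $\epsilon GBGx$ terms are estimated with Young's inequality as $O(\epsilon)(\|x\|^2+\|e\|^2)+O(\epsilon^2)(\|x\|^2+\|e\|^2)$. Since the slow decay budget $-\eta_1\epsilon\|x\|^2$ is of order $\epsilon$ while the fast decay budget $-\eta_2\|e\|^2$ is of order $1$, for $\epsilon$ below a threshold $\bar\epsilon$ all cross terms are absorbed and $V(t+1)\le(1-c\epsilon)V(t)$ for some $c>0$. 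Because $V$ is equivalent to $\|x\|^2+\|e\|^2$ and $(x,y)\mapsto(x,e)$ is a bounded invertible linear map, the origin of \eqref{eq:dynsys_lem} is exponentially stable, which is the claim.

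\textbf{Main obstacle.} The subtle point is the fast subsystem: since $F(t)$ is time-varying, its spectrum carries no information, so one must invoke the discrete-time converse Lyapunov theorem for uniformly exponentially stable linear time-varying systems and, above all, check that the $O(\epsilon)$ additive perturbation $\epsilon GB$ does not break uniform stability --- this is precisely where the boundedness Assumption~4 enters. Once both quadratic certificates are in place, the remaining Young-inequality bookkeeping that balances the order-$\epsilon$ slow contraction against the order-$1$ fast contraction while absorbing the order-$\epsilon$ couplings is routine.
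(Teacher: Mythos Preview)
Your proposal is correct and follows essentially the same route as the paper: the same change of variables $e=y-Gx$ (the paper calls it $z$), the same composite quadratic Lyapunov function built from a continuous-time Lyapunov matrix for $-BG$ and a discrete-time converse-Lyapunov matrix for $F(t)$, and the same small-$\epsilon$ absorption of the $O(\epsilon)$ cross terms. The only cosmetic difference is that the paper packages the final estimate as a $2\times 2$ quadratic form in $(\|x\|,\|z\|)$ whose negative definiteness for small $\epsilon$ is read off directly, whereas you phrase the same bound via Young's inequality; just be careful, when writing it out fully, to choose the Young parameter so that the $O(\epsilon)\|x\|^2$ contribution carries a constant strictly smaller than $\eta_1$, since otherwise ``take $\epsilon$ small'' alone does not close the inequality.
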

\begin{proof}[Proof of Lemma~\ref{lemma}]
Let us first consider the following change of variable:
$$ z(t)=y(t)-Gx(t)$$ 
The dynamics of the system in the variables $x,z$ can be written after some straightforward manipulations as follows:
\begin{equation}
\begin{bmatrix}
x(t+1)\\z(t+1)
\end{bmatrix} = 
\left( \underbrace{\begin{bmatrix}
I-\epsilon BG &0\\ 0 & F(t)
\end{bmatrix}}_{\Sigma(t)} + \epsilon \underbrace{\begin{bmatrix}
0 &-BG\\ GBG & GB
\end{bmatrix}}_{\Gamma}  \right)
\underbrace{\begin{bmatrix}
x(t)\\z(t)
\end{bmatrix}}_{\mu(t)}
%\label{eq:dynsys_lem}
\end{equation}
where we used Assumption~1. From Assumption~2, 3 and 3, using converse Lyapunov theorems \cite{Khalil:96}, it follows that there exist positive definite matrices $P_x>0$ and $P_z(t)>0$ such that 
$$ -P_xBG-G^TB^TP_x\leq -a I, \ \ F(t)^TP_z(t+1)F(t)-P_z(t)\leq -a I, \forall t $$ 
where $a$ is a positive scalar and $P_z(t)$ is bounded, i.e. $\|P_z(t)\|\leq m$.
We will use the following positive definite Lyapunov function to prove exponential stability of the whole system:
$$ U(x,z,t) = x^TP_xx+z^TP_z(t)z = \begin{bmatrix}
x^T & z^T 
\end{bmatrix}  \underbrace{\begin{bmatrix}
P_x &0\\ 0 & P_z(t)
\end{bmatrix}}_{P(t)} \begin{bmatrix} x \\ z\end{bmatrix}$$
If we define time difference of the Lyapunov function as $\Delta U(x,z,t)= U(x(t+1),z(t+1),t+1)-U(x(t,)z(t),t)$ we get:
\begin{eqnarray*} 
\Delta U(x,z,t)\!\!\!&=&\!\! x^T\left(-\epsilon(P_xBG\!+\!G^TB^TP_x)\! +\!\epsilon^2 G^TB^TP_xBG\right)x\!+\\  
\!\!\!&&\!\! +z^T\!\!\left(F(t)^TP_z(t\!+\!1)F(t)\!-\!P_z(t)\right)z \!+\!2\epsilon\mu^T\Sigma^T(t)P(t\!+\!1)\Gamma\mu \!+\!\epsilon^2 \mu^T \Gamma^TP(t\!+\!1)\Gamma \mu \\
\!\!\! &\leq &\!\! \!\!-\epsilon a \|x\|^2\! \!-\! \!a \|z\|^2\! \!+\!\epsilon^2 \!\underbrace{\|P^{\frac{1}{2}}_xBG\|^2}_{b}\! \|x\|^2 \!\!+\!2\epsilon\mu^T\Sigma^T\!(t)P(t\!+\!1)\Gamma\mu \!+\!\epsilon^2 \|P^{\frac{1}{2}}(t\!+\!1)\Gamma\|^2\|\mu\|^2
\end{eqnarray*}
Note that the top left block of $\Gamma$ is zero and that $\Sigma(t)$ and $P(t)$ are diagonal and bounded for all times. From this it follows that
$$ \Sigma^T(t)P(t+1)\Gamma = \begin{bmatrix}
0 &\star \\ \star & \star
\end{bmatrix}    \Longrightarrow  2 \mu^T \Sigma^T(t)P(t+1)\Gamma\mu \leq c (2\|x\|\|z\|+\|z\|^2) $$
for some positive scalar $c$. Boundedness of $P(t)$ also implies that 
$$ \|P^{\frac{1}{2}}(t+1)\Gamma\|^2\|\mu\|^2 \leq d (\|x\|^2+\|z\|^2) $$
for some positive scalar $d$. Putting all together we get
$$ \Delta U(x,z,t) \leq  \begin{bmatrix}
\begin{bmatrix}\|x\| & \|z\| \end{bmatrix}
\end{bmatrix}  \begin{bmatrix}
-\epsilon a +b\epsilon^2 & \epsilon c \\ \epsilon c & -a +\epsilon c +\epsilon^2 d
\end{bmatrix} \begin{bmatrix} \|x\| \\ \|z\|\end{bmatrix} $$
It follows immediately that there exists a critical $\overline \epsilon$ such that for $0<\epsilon<\overline \epsilon$ the matrix in the above equation is strictly negative definite and therefore the system is exponentially stable. 
\end{proof}

We are now ready to state the formal proof of Theorem~\ref{thm:conv_rbj}.
\begin{proof}[Proof of Theorem \ref{thm:conv_rbj}]
The proof relies on Lemma~\ref{lemma}. In order to improve readability, this proof is broken into few steps. The first step is to write the evolution of the RBJ algorithm as the evolution of a dynamical system. The second step is to find its equilibrium point and to linearize it around this point. The third step is show that the linerized dynamical system satisfies the three assumptions listed in Lemma~\ref{lemma}.\\

\noindent \emph{RBJ as a dynamical system:} \\
First of all, note that thanks to Assumption \ref{ass:convex} the second order derivatives and in particular all the variables $\xi^{(i)}_j,\widehat{\xi}^{(i)}_j$ are always well defined and invertible. Now, let the vectors  $\widehat e^{(i)}_j$ be the \emph{vectorization} of  $\widehat \xi^{(i)}_j$, $\widehat e^{(i)}_j = \mathrm{vec}(\widehat \xi^{(i)}_j)$, and the \emph{un-vectorization operator} $\mathrm{vec}^{-1}$ as the inverse of the vectorization operator, i.e.  $ \mathrm{vec}^{-1}(\widehat e^{(i)}_j) = \widehat \xi^{(i)}_j $. 
Let $\widehat x_i$, $\widehat \rho_i$ and $\widehat e_i$ be the vectors in which all the $\widehat x^{(i)}_j$'s, the $\widehat \rho^{(i)}_j$'s, and the $\widehat e^{(i)}_j$'s are stacked, respectively, i.e. $\widehat x_i= (\widehat x^{(i)}_{j_1} \cdots \widehat x^{(i)}_{j_{\mathcal N_i}}) $ and similarly for $\widehat \rho_i$ and $\widehat e_i$.
Let $x$, $\widehat x$, $\widehat \rho$, $\widehat e$ be the vectors collecting all the $x_i$, $\widehat x_i$'s, $\widehat \rho_i$'s and $\widehat e_i$'s, respectively, i.e. $x = (x_1 \cdots  x_N)$ and similarly for $\widehat x$, $\widehat \rho$ and $\widehat e$.\\
For every agent $i$ and neighbours $j\in\mathcal{N}_i$, the dynamic of the local variables are given by the following equations:
\begin{subequations}
\begin{align}
x_i (t+1) & = f_1^i(x(t), \widehat \rho(t), \widehat e(t))\\
\widehat x^{(i)}_j(t+1) & = f_2^{ij} (x(t),\widehat x(t), t)\\
\widehat \rho^{(i)}_j(t+1) & = f_3^{ij} (x(t),\widehat x(t),\widehat \rho(t),  t)\\
\widehat e^{(i)}_j(t+1) & = f_4^{ij} (x(t),\widehat x(t),\widehat e(t),  t)
\end{align}
\label{eq:system_update}
\end{subequations}
where
\begin{subequations}
\begin{align}
f_1^{i}(x,\widehat \rho, \widehat e) &= x_i - \epsilon \Big( \underbrace{\sum_{j \in \mathcal N_i^+} \mathrm{vec}^{-1} (\widehat e^{(i)}_j)}_{f^i_{e}(\widehat e)} \Big) ^{-1} \Big( \underbrace{\sum_{j \in \mathcal N_i^+} \widehat \rho^{(i)}_j}_{f^i_\rho(\widehat \rho)} \Big) \\
%& \quad = x_i - \epsilon \big( \Theta_i \; \mathrm{vec}^{-1} (\widehat e^{(i)}_j) \big) ^{-1} \big( \Theta_i  \widehat \rho_i \big) \\%
f_2^{ij}(x,\widehat x, t) & = 
\begin{cases}
\widehat x^{(i)}_j \;  \quad  \quad  \quad\text{ if } \gamma^{(i)}_j(t) = 0 \\
x_j \quad  \quad  \quad  \quad \text{ if } \gamma^{(i)}_j(t) = 1
\end{cases} \\
f_3^{ij}(x,\widehat x, \widehat \rho, t) & = 
\begin{cases}
\widehat \rho^{(i)}_j \;  \quad  \quad  \quad \quad  \quad  \quad  \quad \text{ if } \gamma^{(i)}_j(t) = 0 \\
 \nabla_i J_j(x_j,\{\widehat x^{(j)}_k\}_{k\in\N_j})  \quad  \text{ if } \gamma^{(i)}_j(t) = 1 
\end{cases}\\
f_4^{ij}(x,\widehat x, \widehat e, t) & = 
\begin{cases}
\widehat e^{(i)}_j \qquad  \text{ if } \gamma^{(i)}_j(t) = 0 \\
\mathrm{vec} \left( \nabla^2_{ii} J_j(x_j,\{\widehat x^{(j)}_k\}_{k\in\N_j}) \right) \; \text{ if } \gamma^{(i)}_j(t) = 1 
\end{cases}.
\end{align}
\end{subequations}
Note that the variables $\rho^{(i)}_j$ and $\xi^{(i)}_j$ do not appear in the dynamics since they are deterministic functions of the variables $x$ and $\widehat x$, and therefore can be omitted. \\

\noindent \emph{Equilibrium point and linearization:} \\
Let $x^*$ be the minimizer of the optimization problem and let us define
\begin{align*}
H_{hk} &= \nabla^2_{hk} J(x^*) = \sum_{j=1}^N  \underbrace{\nabla^2_{hk} J_j(x^*_j,\{ x^*_k\}_{k\in\N_j})}_{H_{hk}^j}= \sum_{j=1}^N H_{hk}^j\\
\widehat x^{(i)*}_j &= x^*_j \\
\widehat \rho^{(i)*}_j &= \nabla_i J_j(x^*_j,\{ x^*_k\}_{k\in\N_j})\\
\widehat e^{(i)*}_j &= \mathrm{vec} \left( \nabla^2_{ii} J_j(x^*_j,\{ x^*_k\}_{k\in\N_j})\right)= \mathrm{vec}(H^j_{ii})
\end{align*}
Notice that $\sum_{j=1}^N \widehat \rho^{(i)*}_j = \nabla_i J(x^*)=0$, since the gradient computed at the minimizer is zero. It is now simple to verify by direct inspection that $(x^*, \widehat x^*, \widehat \rho^*, \widehat e^*)$ is an equilibrium point for the dynamical system described by \eqref{eq:system_update}.
Next, we will analyze the behaviour of system \eqref{eq:system_update} in the neighborhood of the equilibrium point $(x^*, \widehat x^*, \widehat \rho^*, \widehat e^*)$.
Consider the change of variables
\begin{equation}
\begin{array}{rl}
\psi &= x - x^* \\
\widehat \psi &= \widehat x - \widehat x^*\\
\widehat \eta &= \widehat \rho - \widehat \rho^{*} \\
\widehat \zeta & = \widehat e - \widehat e^*
\end{array}
\label{equ:chng_var}
\end{equation}
If we linearize equations \eqref{eq:system_update} around $(x^*, \widehat x^*, \widehat \rho^*, \widehat e^*)$, we obtain
\begin{align}
\psi_i(t+1) &\simeq \psi_i(t) - \epsilon H_{ii}^{-1} \, \sum_{j\in\mathcal{N}_i^+} \widehat \eta^{(i)}_j \label{1}\\
\widehat \psi_j^{(i)}(t+1) & \simeq 
\begin{cases}
\widehat \psi_j^{(i)}(t) \; \text{ if } \gamma^{(i)}_j(t) = 0 \\
\psi_j(t) \;\text{ if } \gamma^{(i)}_j(t) = 1
\end{cases} \\
\widehat \eta^{(i)}_j (t+1) & \simeq 
\begin{cases}
\widehat \eta^{(i)}_j (t) \; \text{ if } \gamma^{(i)}_j(t) = 0 \\
H^j_{ij} \psi_j (t) +\sum_{k\in\mathcal{N}_j}H^j_{ik} \widehat \psi_k^{(i)}(t)    \text{ if } \gamma^{(i)}_j(t) = 1 \label{3}
\end{cases}\\
\widehat \zeta^{(i)}_j (t+1) & \simeq 
\begin{cases}
\widehat \zeta^{(i)}_j (t) \; \text{ if } \gamma^{(i)}_j (t) = 0 \\
K^j_{ij} \psi_j (t) +\sum_{k\in\mathcal{N}_j}K^j_{ik} \widehat \psi_k^{(i)}(t) \; \text{ if } \gamma^{(i)}_j (t) = 1 . \label{2}
\end{cases}.
\end{align}
where in Eqn.~\eqref{1} we used the fact that $\left.\frac{\partial f_1^i}{\partial \widehat e}\right|_{x^*,\widehat \rho^*,\widehat e^*}=\left.-\epsilon\frac{\partial (f_e^i)^{-1}}{\partial \widehat e}f^i_\rho\right|_{x^*,\widehat \rho^*,\widehat e^*}=0$  since $\left.f^i_\rho\right|_{x^*,\widehat \rho^*,\widehat e^*}=\nabla_i J(x^*)= 0$, and the fact that $f_e^i(\widehat e^*)=H_{ii}$. In Eqn.\eqref{3} we used the fact that $H^j_{ik}=\nabla^2_{ik}J_j(x^*_j,\{ x^*_k\}_{k\in\N_j})$. Finally, in Eqn.~\eqref{2} the matrices $K^j_{ik}$ depends on third order derivatives of $J(x)$ whose values are unimportant for the analysis of the stability of the dynamics.
By collecting all the variables together, we obtain the system 
%
%\begin{small}
\begin{align}
\begin{bmatrix}
\psi(t+1)\\ \widehat \psi(t+1) \\ \widehat \eta(t+1) \\ \widehat \zeta (t+1)
\end{bmatrix} &=
\left[
\begin{array}{c|ccc}
I& 0& -\epsilon B& 0 \\ \hline
C_1(t) & F_1(t) & 0 & 0\\
C_2(t) & F_2(t) & F_3(t) & 0\\
C_3(t) & F_4(t) & 0 & F_5(t) \\
\end{array}
\right]
\begin{bmatrix}
\psi(t)\\ \widehat \psi(t) \\ \widehat \eta(t) \\ \widehat \zeta (t)
\end{bmatrix}			\notag \\
\begin{bmatrix}
\psi(t+1)\\ y(t+1)
\end{bmatrix}&=
\left[
\begin{array}{c|c}
I& -\epsilon B \\ \hline
C(t) & F(t) 
\end{array}
\right]
\begin{bmatrix}
\psi(t)\\ y(t)
\end{bmatrix}.
\label{eq:dynamic_optalg}
\end{align}
%\end{small}
\noindent where $y=(\widehat \psi, \widehat \xi, \widehat \zeta)$ collects the fast dynamic variables. Notice that $F_1(t)$, $F_3(t)$ and $F_5(t)$ are diagonal matrices whose entries are either 1 or 0, depending on the communication between agent success, and, as a consequence, $F(t)$ is a lower triangular matrix, $\forall t$.\\

\noindent \emph{Assumption 1 of Lemma~\ref{lemma}:} \\
We now start proving that the linearized dynamics above satisfies the three assumptions of Lemma~\ref{lemma} where $\psi$ plays the role of $x$ in the Lemma. 
It is simple to verify by direct inspection that for a fixed $\psi$, the following maps satisfy Assumption 1 of Lemma~\ref{lemma}:
\begin{align}
\widehat \psi^{(i)}_j  &=  \psi_j \\
\widehat \eta^{(i)}_j & = H^j_{ij} \psi_j  +\sum_{k\in\mathcal{N}_j}H^j_{ik} \psi_k  \\
\widehat \zeta^{(i)}_j  & = K^j_{ij} \psi_j  +\sum_{k\in\mathcal{N}_j}K^j_{ik} \psi_k
\end{align}
in fact, this is equivalent of saying that there exists a matrix $G$ such that $y=G\psi$ satisfies the equality $y=C(t)\psi + F(t)y$ for all $\psi$ and $t$.\\

\noindent \emph{Assumption 2 of Lemma~\ref{lemma}:} \\%
Let us now consider the fast dynamics of the system given by the following system: 
\begin{equation*}
z(t) = F(t-1)\cdots F(0) z(0) = \Omega(t)z(0) 
\end{equation*}
Assumption~\ref{ass:persistent_comm}, on the persistent communication among the agents, assures that
$$
\begin{array}{l}
F_1(T-1)\cdots F_1(0) = \Omega_1(T)=0 \\
F_3(T-1)\cdots F_3(0) =  \Omega_3(T)=0 \\
F_5(T-1)\cdots F_5(0) =  \Omega_5(T)=0 
\end{array}
$$
in fact when $\gamma_{j}^{(i)}(t)=1$, the corresponding raws in the matrices $F_1(t),F_3(t),F_5(t)$ become zero, and this property will be inherited also by the product matrices  $\Omega_1(T)$, $\Omega_3(T)$, $\Omega_3(T)$ since all $F_1(t)$, $F_2(t)$, $F_3(t)$ are diagonal. Since all $\gamma_{j}^{(i)}(t)$ will be equal to one at least once within the window $t\in[0,\cdots, T-1]$, then the matrices  $\Omega_1(T)$, $\Omega_3(T)$, $\Omega_3(T)$ must be all zero.  
Finally, since the matrix $F(t)$ is lower triangular, we have that after a maximum of $(2T + 1)$ iterations the product matrix  $\Omega(2T+1)$ will be zero and thus $z(2T + 1) = 0$. That is, the fast variable dynamic is exponentially stable, since it reaches the equilibrium in a finite number of iteration.\\

\noindent \emph{Assumption 3 of Lemma~\ref{lemma}:} \\%
Finally, consider the slow dynamical system 
\begin{equation}
\dot \psi(t) = - B G \psi (t).
\label{eq:xi2}
\end{equation} 
which by direct substitution from the previous analysis can be locally written as:
$$ \dot \psi_i(t) = -H^{-1}_{ii} \left(\sum_{j\in\mathcal{N}_i^+}\left(H^j_{ij} \psi_j  +\sum_{k\in\mathcal{N}_i}H^j_{ik} \psi_k \right)\right)=-H^{-1}_{ii}H^i\psi $$
where $H$ was defined above and corresponds to the Hessian of the global cost $J$ computed at $x^*$, i.e. $H=\nabla^2 J(x^*)$ and $H^i$ is its $i$-th block-row,\\ i.e., $H^i= [\nabla^2_{i1} J(x^*)\cdots \nabla^2_{iN} J(x^*)]$. This implies that
$$BG = \left(\mathrm{diag}(H)\right)^{-1}H\, ,$$
therefore, if we choose
\begin{equation*}
V(\psi) = \frac{1}{2} \psi\T  H\psi,
\end{equation*}
as a Lyapunov function, it is straightforward to see that system \eqref{eq:xi2} is asymptotically stable since $\dot V(\psi(t)) = -\psi\T (t) H\left(\mathrm{diag}(H)\right)^{-1}H \psi(t)<0,x\neq 0$ being $H>0$ by assumption.

\noindent \emph{Assumption 4 of Lemma~\ref{lemma}:} \\
This comes from the observation that the time-variance of the state matrices depends on the specific sequence of packet losses that can occur. Since there are only a finite number of possible different sequences, the assumption is clearly satisfied. 

Concluding, system \eqref{eq:dynamic_optalg} satisfies the hypothesis of Lemma \ref{lemma}, and thus there exists $\bar \epsilon$, with $0<\epsilon<\bar \epsilon$ such that, by using the resilient block Jacobi Algorithm \ref{alg:rbj},
$$\lim_{t\rightarrow \infty} x(t) = x^*.$$
locally exponentially fast.

\end{proof}

%========================
\bibliographystyle{IEEEtran}
\bibliography{Biblio.bib}

\end{document}